\documentclass[12pt,twoside]{amsart}
\usepackage{amssymb,amsmath,amsthm, amscd, enumerate, mathrsfs, upgreek}
\usepackage{graphicx, hhline, tikz}
\usepackage[colorlinks=true,pagebackref,hyperindex]{hyperref}
\usepackage[all]{xy}
\usepackage{textcomp}
\usepackage{color}
\usepackage[backrefs, alphabetic, initials]{amsrefs}
\usepackage{color} 
\usepackage{latexsym}
\usepackage[T1]{fontenc} 
\usepackage{fancyhdr}
\usepackage{enumerate}
\usepackage{tikz-cd}
\usepackage{array, longtable}
\usepackage{caption}
\usepackage{mathtools}
\newcolumntype{C}{>{$}c<{$}}
\newcolumntype{L}{>{$}l<{$}}

\title[Kawamata--Miyaoka type inequality]
{Kawamata--Miyaoka type inequality for varieties with nef anticanonical divisor}

\date{\today, version 0.02}
\subjclass[2020]{Primary 14J45; Secondary 14E30}
\keywords{Kawamata--Miyaoka type inequality, orbifold second Chern class}

\author{Haidong Liu}
\address{School of Mathematics, Sun Yat-sen University, Guangzhou, 510275, China}
\email{liuhd35@mail.sysu.edu.cn, jiuguiaqi@gmail.com}

\DeclareMathOperator{\rank}{rank}

\DeclareMathOperator{\Exc}{Exc}

\DeclareMathOperator{\reg}{reg}

\DeclareMathOperator{\Ext}{Ext}

\DeclareMathOperator{\sm}{sm}

\newtheorem{thm}{Theorem}[section]

\newtheorem{prop}[thm]{Proposition}

\newtheorem{cor}[thm]{Corollary}

\theoremstyle{definition}
\newtheorem{ex}[thm]{Example}
\newtheorem{defn}[thm]{Definition}
\newtheorem{rem}[thm]{Remark}
\newtheorem*{ack}{Acknowledgments}

\newtheorem*{ai}{AI disclosure}

\makeatletter
 
 \@addtoreset{equation}{section}
\makeatother

\begin{document}

\begin{abstract}
    Let $X$ be an $\epsilon$-lc projective variety of dimension $n\geq 2$ such that $-K_X$ is nef and $0<\epsilon \leq 1$ is a real number. Then for any nef divisors $D_1,\dots,D_{n-2}$, 
    \[
         c_1(X)^{2}\cdot D_1\cdots D_{n-2}\leq \frac{2(1+\epsilon)}{\epsilon}\,\hat c_2(X)\cdot D_1\cdots D_{n-2}.
    \] 
    In particular, there exists a Kawamata--Miyaoka type inequality
    \[
        c_1(X)^n\leq \frac{2(1+\epsilon)}{\epsilon}\,\hat c_2(X)\cdot c_1(X)^{n-2}.
    \]
\end{abstract}

\maketitle 
\tableofcontents

\section{Introduction}\label{sec1}
A normal projective variety is called \emph{weak Fano} if its anti-canonical divisor is nef and big, generalizing Fano varieties. The Kawamata--Miyaoka type inequality for weak Fano varieties with mild singularities was introduced and studied in a series of works \cites{ijl,liu-liu2023,liu-liu2024,jiang-liu-liu,liu2025}. So far, we have established the inequality for $\epsilon$-lc weak Fano varieties of either Picard number one or Fano index at least three:
\begin{thm}[\cite{jiang-liu-liu}*{Theorem 1.2} and \cite{liu2025}*{Remark 3.5}]\label{thm.known}
    Let $0<\epsilon \leq 1$ be a real number. Let $X$ be a $\mathbb Q$-factorial $\epsilon$-lc weak Fano variety of dimension $n\geq 2$. Suppose either $\rho(X)=1$ or $q_{\mathbb Q}(X)\geq 3$. Then we have 
    \[
         c_1(X)^{n}\leq \frac{2(1+\epsilon)}{\epsilon}\,\hat c_2(X)\cdot c_1(X)^{n-2}.
    \]
\end{thm}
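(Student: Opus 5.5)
The plan is to run a Bogomolov--Gieseker type argument on the reflexive tangent sheaf $\mathcal T_X$, using the Harder--Narasimhan filtration with respect to the movable class $H=c_1(X)^{n-1}$; this class is movable because $-K_X$ is nef and big. Since $X$ is $\mathbb Q$-factorial and klt, it has quotient singularities in codimension two. So $\hat c_2$ is defined, and the orbifold Bogomolov inequality holds for $H$-semistable reflexive sheaves.

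\textbf{Semistable case.} If $\mathcal T_X$ is $H$-semistable, the orbifold Bogomolov inequality gives
\[
c_1(X)^n\le \tfrac{2n}{n-1}\,\hat c_2(X)\cdot c_1(X)^{n-2}.
\]
Since $\tfrac{2n}{n-1}\le 4\le \tfrac{2(1+\epsilon)}{\epsilon}$ for $n\ge 2$, this is already enough.

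\textbf{Unstable case, the filtration estimate.} Let $0=\mathcal E_0\subset \mathcal E_1\subset\dots\subset\mathcal E_k=\mathcal T_X$ be the HN filtration, with graded pieces $\mathcal G_i$ of rank $r_i$ and slopes $\mu_1>\dots>\mu_k$. Applying Bogomolov to each $\mathcal G_i$ and using the Whitney-type formula for $\hat c_2$ gives a lower bound of $\hat c_2\cdot c_1^{n-2}$. This bound is $\tfrac12\bigl(c_1^n-\sum_i c_1(\mathcal G_i)^2\cdot c_1^{n-2}/r_i\bigr)$ up to the standard Hodge index correction. By Hodge index, $c_1(\mathcal G_i)^2\cdot c_1^{n-2}\le \mu_i^2/c_1^n$ after normalizing. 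The task then reduces to an elementary optimization in the $\mu_i$ and $r_i$, under the constraints $\sum r_i\mu_i=c_1^n$ and a \emph{lower bound on} $\mu_{\min}=\mu_k$.

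\textbf{Key geometric input.} The needed lower bound is $\mu_{\min}\ge$ a positive multiple of $\epsilon/(1+\epsilon)$ times the average slope. By Campana--P\u{a}un, every $\mathcal E_i$ with $\mu_{\min}(\mathcal E_i)>0$ is an algebraically integrable foliation with rationally connected general leaves. For such a foliation $\mathcal F$ on an $\epsilon$-lc variety, one compares $K_{\mathcal F}$ and $K_X$ along the family of leaves. The $\epsilon$-lc condition bounds the discrepancy of the general fibre of the (normalized) graph of the leaves, and yields an inequality of the form
\[
-K_X\cdot H\ \ge\ \text{(const depending on }\epsilon)\cdot(-K_{\mathcal F})\cdot H,
\]
equivalently an upper bound for $\mu_{\max}$ relative to $\mu(\mathcal T_X)$. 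The constant $\tfrac{2(1+\epsilon)}{\epsilon}$ should come out of this step combined with the optimization.

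\textbf{The two hypotheses.} When $\rho(X)=1$, all divisors are proportional to $-K_X$. This makes the Hodge index steps equalities and the slope comparisons clean. When $q_{\mathbb Q}(X)\ge3$, I would instead write $-K_X\sim_{\mathbb Q} qA$ and use that destabilizing pieces have $c_1$ bounded in terms of $A$, which controls the optimization similarly.

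\textbf{Main obstacle.} I expect the hardest step to be the foliation slope bound from $\epsilon$-lc. I would first try it via the family of leaves $Z\to T$ with $Z\to X$ birational. There I would compare $K_{Z/T}$ with the pullback of $K_X$, and use that the general fibre is rationally connected together with the $\epsilon$-lc discrepancy bound along the exceptional locus.
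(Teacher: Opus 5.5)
There is a genuine gap: the step that carries all the content is misidentified, and the hypotheses $\rho(X)=1$ and $q_{\mathbb Q}(X)\ge 3$ are never actually used. Your skeleton is the tangent-sheaf strategy of the cited works: Bogomolov--Gieseker in the semistable case, a Langer-type estimate otherwise, then the destabilizing foliation and its family of leaves. The paper quotes the statement from that earlier work; it is also the case $D_1=\dots=D_{n-2}=c_1(X)$ of Theorem~\ref{thm.main}, which is proved via the canonical extension.

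The reduction is simpler than you set it up. Measure slopes against $c_1(X)^{n-1}$. Theorem~\ref{thm.LangerIneqII}, together with $\mu_{\min}(\mathcal T_X)\ge 0$ from Ou's generic nefness, gives at once
\[
\bigl(2\hat c_2(X)-c_1(X)^2\bigr)\cdot c_1(X)^{n-2}+\mu_{\max}(\mathcal T_X)\ge 0.
\]
So what is used is $\mu_{\min}\ge 0$, not a positive lower bound of size $\frac{\epsilon}{1+\epsilon}$; such a lower bound is not ``equivalent'' to an upper bound on $\mu_{\max}$. Everything reduces to $\mu_{\max}\le \frac{1}{1+\epsilon}\,c_1(X)^n$. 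This route also avoids your Whitney formula for $\hat c_2$ along the Harder--Narasimhan filtration, whose naive form fails for sequences that are not $\mathbb Q$-exact. If the maximal destabilizing subsheaf has rank $\ge 2$, generic nefness of the quotient already gives $\mu_{\max}\le\frac12 c_1(X)^n$. So only a rank-one foliation $\mathcal L$ needs geometry.

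For that rank-one $\mathcal L$, the bound you assert for an arbitrary rationally connected foliation on an $\epsilon$-lc variety is not what the family of leaves $f\colon U\to T$, $e\colon U\to X$ delivers. The $\epsilon$-lc hypothesis can only enter through discrepancies of $e$-exceptional divisors meeting the general fibre $F\simeq\mathbb P^1$, i.e.\ $f$-horizontal ones.

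If there are none, the general leaf $C\simeq\mathbb P^1$ lies in $X_{\sm}$, with $\mathcal L|_C=\mathcal T_C$ and $-K_X\cdot C=2$, so the singularities are invisible. The argument with a multiple of $-e^*K_X$ as boundary then yields only Ou's statement that $K_{\mathcal L}-K_X$ is pseudoeffective, i.e.\ $\mu_{\max}\le c_1(X)^n$. In the display this gives merely $\hat c_2(X)\cdot c_1(X)^{n-2}\ge 0$.

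The hypotheses exist exactly to exclude this case:
\begin{enumerate}
\item If $\rho(X)=1$, the almost holomorphic map $X\dasharrow T$, with $\dim T=n-1\ge 1$, produces a nonzero effective divisor disjoint from the general $C$. This is impossible, since that divisor would be ample.
\item If $q_{\mathbb Q}(X)\ge 3$, write $-K_X\sim_{\mathbb Q}qA$. Since $C\subset X_{\sm}$, $A\cdot C\in\mathbb Z_{>0}$, which forces $q=2/(A\cdot C)\le 2$.
\end{enumerate}

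Once horizontal exceptional divisors $E_i$ exist, set $s=\sum_i E_i\cdot F\ge 1$; each has $a(E_i,X)\ge\epsilon-1$. Adjunction gives $-e^*K_X\cdot F=2+\sum_i a(E_i,X)\,E_i\cdot F$, hence
\[
\deg\Bigl(K_F+\sum_i E_i|_F+\tfrac{1}{1+\epsilon}(-e^*K_X)|_F\Bigr)\ge \frac{2\epsilon(s-1)}{1+\epsilon}\ge 0 .
\]
Proposition~\ref{prop.pseff}, after the usual $\varepsilon H$-perturbation, then shows that $K_{\mathcal L}-\frac{1}{1+\epsilon}K_X$ is pseudoeffective. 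This is precisely the bound that produces the constant $\frac{2(1+\epsilon)}{\epsilon}$.

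The roles you assign to the hypotheses do not produce such a horizontal divisor: Hodge index becoming an equality when $\rho=1$, and bounding $c_1$ of destabilizing pieces in terms of $A$. So your argument does not close. For comparison, the paper removes the hypotheses entirely by working with $\mathcal E_X$. In the no-horizontal case, the extension restricted to $C$ does not split, and this yields an effective divisor $D$ with $D\cdot C\ge 1$ that plays the role of $E$.
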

These inequalities have turned out to be very effective in the classification of weak Fano varieties, see \cite{liu-liu2024}, \cite{jiang-liu-liu}*{Theorem 1.1}, \cite{jiang-liu2025}*{Theorem 1.1}, and \cite{liu2025}*{Theorem 1.3} for example. Toward the classification of weak Fano varieties of arbitrary Picard number, an unconditional Kawamata--Miyaoka type inequality would be of great significance. In this paper, we establish such an inequality in full generality.

\begin{thm}[Corollary \ref{cor.factorization}]\label{thm.KMinq}
    Let $0<\epsilon \leq 1$ be a real number. Let $X$ be an $\epsilon$-lc projective variety of dimension $n\geq 2$ such that $-K_X$ is nef. Then for any nef divisors $D_1,\dots,D_{n-2}$, we have 
    \[
         \left(\frac{2(1+\epsilon)}{\epsilon}\hat c_2(X)-c_1(X)^{2} \right)\cdot D_1\cdots D_{n-2}\geq 0.
    \]
\end{thm}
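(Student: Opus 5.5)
The plan is to prove the inequality through a Harder--Narasimhan filtration of the reflexive tangent sheaf $\mathcal T_X$ with respect to the movable class $\alpha = D_1\cdots D_{n-2}\cdot H$. Here $H$ is an ample divisor that will later be scaled down. The label ``Corollary \ref{cor.factorization}'' suggests the argument goes through a factorization of the left-hand side, so I would organize it as follows.

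\textbf{Step 1: reductions.} By perturbation and continuity, it suffices to treat the case where each $D_i$ is ample; we then take $D_i+tH$ and let $t\to 0$. We replace $X$ by a $\mathbb Q$-factorial terminalization, or a small $\mathbb Q$-factorialization, $f\colon Y\to X$ with $K_Y=f^*K_X$. The variety $Y$ is still $\epsilon$-lc with $-K_Y$ nef, and the pullbacks $f^*D_i$ are nef. The orbifold Chern numbers $\hat c_2$ and $c_1^2$, intersected against pulled-back nef classes, should agree on $X$ and $Y$, since $f$ is crepant and small in codimension two after taking the right model. Finally, we cut by general hyperplane sections to reduce the computation to a surface-like statement on the complete intersection class.

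\textbf{Step 2: Bogomolov--Gieseker for semistable pieces.} Write $\mathcal T_X$ as a filtration whose graded pieces $\mathcal G_i$ are $\alpha$-semistable, with slopes $\mu_1>\dots>\mu_k$. The orbifold Bogomolov--Gieseker inequality applies to each piece:
\[
\Big(2r_i\hat c_2(\mathcal G_i)-(r_i-1)\hat c_1(\mathcal G_i)^2\Big)\cdot D_1\cdots D_{n-2}\ge 0 .
\]
This is the standard result for klt varieties (Greb--Kebekus--Peternell / Miyaoka), valid for semistability with respect to $D_1,\dots,D_{n-2}$ ample. Summing the pieces expresses $2\hat c_2(X) - c_1(X)^2$ in terms of the $c_1(\mathcal G_i)$. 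The error term is controlled by Hodge-index type inequalities $c_1(\mathcal G_i)\cdot c_1(\mathcal G_j)\cdot D\le \dots$.

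\textbf{Step 3: controlling the destabilizing pieces with $\epsilon$.} Nefness of $-K_X$ gives $c_1(X)\cdot\alpha\ge 0$. This step is the main obstacle: bounding the maximal destabilizing slope $\mu_{\max}$ in terms of $c_1(X)$ using the $\epsilon$-lc hypothesis. I would use the Campana--P\u{a}un / Bogomolov--McQuillan foliation criterion. If $\mu_{\min}(\mathcal T_X)<0$ on the quotient side, or the first piece $\mathcal F=\mathcal G_1$ has large positive slope, then $\mathcal F$ is an algebraically integrable foliation with rationally connected leaves. Its general leaf closure then yields a fibration whose fibers are covered by rational curves. The $\epsilon$-lc condition bounds how positive $c_1(\mathcal F)$ can be relative to $c_1(X)$: the canonical bundle formula and adjunction for foliations give $-K_{\mathcal F}\le \frac{1+\epsilon}{\epsilon}(-K_X)$, or an inequality of the form $\epsilon\, c_1(\mathcal F)\cdot c_1(X)\cdot D\le (1+\epsilon)\, c_1(X)^2\cdot D$. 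This is where the constant $\frac{2(1+\epsilon)}{\epsilon}$ should emerge, in the same way it arose in Theorem \ref{thm.known}. Those earlier proofs used $\rho=1$ or index $\ge3$ precisely to make the pieces numerically proportional to $c_1(X)$; here that proportionality would instead be replaced by the Hodge index inequality on the complete-intersection surface.

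\textbf{Step 4: combination and limit.} Plugging these bounds into the sum from Step 2 gives a quadratic inequality in the slopes, and optimizing over the ranks yields the stated bound. Letting $H\to 0$ and letting the ample perturbations of the $D_i$ go to zero then completes the proof. Throughout, the delicate point is Step 3: proving the uniform foliation-adjunction bound for $\epsilon$-lc varieties with only $-K_X$ nef, without assuming bigness.
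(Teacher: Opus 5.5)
Your Step~1 matches the paper's reduction (Corollary~\ref{cor.factorization}), as long as you use the small $\mathbb Q$-factorialization; a terminalization is not crepant once $\epsilon<1$. Step~2 is essentially Theorem~\ref{thm.LangerIneqII}. The genuine gap is Step~3, and everything rests on it.

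Write $D_\bullet$ for $D_1\cdots D_{n-2}$. Dispose of the case $c_1(X)^2\cdot D_\bullet=0$ by Proposition~\ref{prop.pseffofc2}, and polarize by $(D_1,\dots,D_{n-2},c_1(X))$. Langer's inequality together with $\mu_{\min}(\mathcal T_X)\geq 0$ shows that a bound $\mu_{\max}(\mathcal T_X)\le \lambda\, c_1(X)^2\cdot D_\bullet$ yields $2\hat c_2(X)\cdot D_\bullet\ge(1-\lambda)\,c_1(X)^2\cdot D_\bullet$. So you need $\lambda\le\frac{1}{1+\epsilon}$.

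Your proposed inequality $\epsilon\, c_1(\mathcal F)\cdot c_1(X)\cdot D_\bullet\le(1+\epsilon)\,c_1(X)^2\cdot D_\bullet$ means $\lambda=\frac{1+\epsilon}{\epsilon}>1$. That is weaker than the bound $\lambda=1$ that generic nefness gives for free, and $\lambda=1$ only recovers $\hat c_2(X)\cdot D_\bullet\geq 0$. A destabilizing subsheaf of rank $\geq 2$ gives $\lambda\le\frac12$, so the whole difficulty is a rank one foliation $\mathcal L$ with $\mathbb P^1$ leaves.

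The $\epsilon$-lc hypothesis enters only through an $f$-horizontal $e$-exceptional divisor on the family of leaves, which gives $\lambda=\frac1{1+\epsilon}$ via Proposition~\ref{prop.pseff}. If there is none, a general leaf $C$ lies in $X_{\sm}$ with $-K_X\cdot C=2$. Then $K_F-\frac12e^*K_X|_F$ has degree $-1$, and you get nothing better than $K_{\mathcal L}-K_X$ pseudoeffective. This is exactly why Theorem~\ref{thm.known} needed $\rho(X)=1$ or $q_{\mathbb Q}(X)\ge3$. A Hodge index argument on complete-intersection surfaces cannot substitute: what is missing is an upper bound on $\mu(\mathcal L)$, not proportionality of classes. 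Invoking ``canonical bundle formula and adjunction for foliations'' does not supply it.

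The missing idea is to destabilize the canonical extension $0\to\mathcal O_X\to\mathcal E_X\to\mathcal T_X\to0$ instead of $\mathcal T_X$. It has rank $n+1$, the same $c_1$ and $\hat c_2$ as $X$ (Proposition~\ref{prop.c1c2}), and is still generically nef (Proposition~\ref{prop.gnef}). So Langer's inequality gives $(2\hat c_2(X)-c_1(X)^2)\cdot D_\bullet+\mu_{\max}(\mathcal E_X)\ge0$.

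Suppose the maximal destabilizing subsheaf $\mathcal F$ of $\mathcal E_X$ has rank one. It has positive slope, so it cannot lie in $\mathcal O_X$, and hence it injects into $\mathcal T_X$. Let $\mathcal L$ be the saturation of its image, so $\mathcal F\simeq\mathcal L(-D)^{**}$ with $D\geq 0$. In the case without horizontal exceptional divisors, functoriality identifies the preimage of $\mathcal T_C$ in $\mathcal E_X|_C$ with the canonical extension on $C$ for $-K_X|_C$. Its class is nonzero, so it does not split, and this forces $D\cdot C\ge1$.

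The strict transform of $D$ then replaces the missing horizontal divisor. It makes $K_{\mathcal L}+D-\frac12K_X=-c_1(\mathcal F)-\frac12K_X$ pseudoeffective, hence $\mu(\mathcal F)\le\frac12c_1(X)^2\cdot D_\bullet$. Without this device, or another substitute for the horizontal divisor, your Step~4 has no valid input to optimize.
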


A direct consequence is the Kawamata--Miyaoka type inequality for weak Fano varieties:

\begin{cor}\label{cor.wf}
    Let $0<\epsilon \leq 1$ be a real number. Let $X$ be an $\epsilon$-lc weak Fano variety of dimension $n\geq 2$. Then for any nef divisors $D_1,\dots,D_{n-2}$, we have 
    \[
         \left(\frac{2(1+\epsilon)}{\epsilon}\hat c_2(X)-c_1(X)^{2} \right)\cdot D_1\cdots D_{n-2}\geq 0.
    \]
    In particular, when $D_1=\cdots =D_{n-2}=c_1(X)$, we have
    \[
         c_1(X)^{n} \leq \frac{2(1+\epsilon)}{\epsilon}\,\hat c_2(X)\cdot c_1(X)^{n-2}.
    \]
\end{cor}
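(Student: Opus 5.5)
The corollary follows directly from Theorem \ref{thm.KMinq}; we only have to check that its hypotheses hold and then specialize the nef divisors.

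First, let $X$ be an $\epsilon$-lc weak Fano variety of dimension $n\geq 2$. By definition $X$ is a normal projective variety and $-K_X$ is nef (and big). Bigness plays no role here. Thus $X$ satisfies every hypothesis of Theorem \ref{thm.KMinq}: it is $\epsilon$-lc with $0<\epsilon\leq 1$, projective, of dimension $n\geq 2$, and $-K_X$ is nef. For arbitrary nef divisors $D_1,\dots,D_{n-2}$, Theorem \ref{thm.KMinq} then gives
\[
\left(\frac{2(1+\epsilon)}{\epsilon}\hat c_2(X)-c_1(X)^{2} \right)\cdot D_1\cdots D_{n-2}\geq 0 ,
\]
which is the first assertion.

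For the second assertion, take $D_1=\cdots=D_{n-2}=c_1(X)=-K_X$. This is allowed because $-K_X$ is nef. The one point to watch is that $-K_X$ need only be $\mathbb Q$-Cartier (or $\mathbb R$-Cartier), not Cartier. The intersection numbers against $\hat c_2(X)$ and $c_1(X)^2$ are multilinear in the $D_i$, so we may replace $-K_X$ by a Cartier multiple $m(-K_X)$, apply Theorem \ref{thm.KMinq}, and divide by $m^{n-2}>0$. If Theorem \ref{thm.KMinq} already allows $\mathbb Q$-divisors, this step is unnecessary.

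Expanding the product using multilinearity gives
\[
\frac{2(1+\epsilon)}{\epsilon}\hat c_2(X)\cdot c_1(X)^{n-2}-c_1(X)^n\geq 0,
\]
and rearranging yields the stated Kawamata--Miyaoka type inequality.

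The only step needing any care is this multilinearity and rescaling when $K_X$ is merely $\mathbb Q$-Cartier. It is routine, since the orbifold Chern class pairing $\hat c_2(X)\cdot D_1\cdots D_{n-2}$ is defined multilinearly in the $D_i$.
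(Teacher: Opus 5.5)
Your proposal is correct and matches the paper, which presents this corollary as an immediate consequence of Theorem \ref{thm.KMinq} (proved as Corollary \ref{cor.factorization}): a weak Fano variety is projective with $-K_X$ nef, and the second inequality comes from taking $D_1=\cdots=D_{n-2}=c_1(X)$. Your rescaling remark for a merely $\mathbb Q$-Cartier $-K_X$ is harmless but not needed, since the pairings are already $\mathbb Q$-multilinear on $N^1(X)_{\mathbb Q}$.
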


\subsection{Basic idea}

As shown in \cites{ijl,liu-liu2023, jiang-liu-liu,liu2025}, the key step to establish the Kawamata--Miyaoka type inequality for weak Fano varieties is to study the foliation induced by the rank one maximal destabilizing subsheaf $\mathcal F$ of the tangent sheaf $\mathcal T_X$. The rank one foliation induces a birational modification $e\colon U\to X$ and a $\mathbb P^1$-fibration $p\colon U\to T$ such that $\mathcal G$ is the induced foliation by $p$ and $-K_U+\Delta=-e^*K_X$, where $\Delta$ is an $e$-exceptional $\mathbb Q$-divisor.

To prove the pseudoeffectivity of the (orbifold) second Chern class of $X$, a key step in Ou's result \cite{ou} is to show that $K_{\mathcal G}-e^*K_X$ is pseudoeffective, hence so is its pushforward $K_{\mathcal F}-K_X$. However, this positivity is not strong enough to yield the desired Kawamata--Miyaoka type inequality. Then a key observation in \cite{liu2025} is that, the assumption $\rho(X)=1$ in \cites{liu-liu2023, jiang-liu-liu} and the assumption $q_{\mathbb Q}(X)\geq 3$ in \cite{liu2025} are both to ensure that $\Exc(e)$ contains some $p$-horizontal component $\Delta'$, such that $\Delta'$ can replace part of $-e^*K_X$ on the general fibers of $p$; for instance, when $X$ has canonical singularities, we can show that $K_{\mathcal G}+a\Delta'-\frac{1}{2}e^*K_X$ is pseudoeffective for some $a>0$, and so is its pushforward $K_{\mathcal F}-\frac{1}{2}K_X$. Then this would implies the Kawamata--Miyaoka type inequality by the $\mathbb Q$-variant of Langer's inequality.

In the remaining cases, this trick does not work due to the possible absence of $p$-horizontal $e$-exceptional divisors. Our aim is therefore to find suitable substitutes for such divisors. Intrigued by the proof of the Miyaoka--Yau inequality \cite{gkp}*{Proposition 1.1}, which is the counterpart of our Kawamata--Miyaoka type inequality, we can consider the rank one maximal destabilizing subsheaf $\mathcal R$ of the canonical extension instead of the tangent sheaf. By this new approach, either we can find a $p$-horizontal $e$-exceptional divisor, or the difference between the image of $\mathcal R$ in $\mathcal T_X$ and its saturation $\mathcal L\subset \mathcal T_X$ yields a non-trivial effective Weil divisor $D$. Then such a divisor $D$ would play the same role of the $p$-horizontal $e$-exceptional divisor as above, and we are done.

\begin{rem}
    Let $X$ be a normal compact K\"ahler variety of dimension $n$ with $\epsilon$-lc singularities. If $-K_X$ is nef and big, then $X$ is already projective by Namikawa's criterion \cite{namikawa}*{Theorem 1.6}, and the Kawamata--Miyaoka type inequality is nothing but Corollary \ref{cor.wf}. So we assume that $-K_X$ is nef but not big. In this case, the same strategy together with the algebraic foliations with rationally connected leaves established in the K\"ahler setting (see \cite{ou2025}*{Theorem 1.4}, \cite{cao-paun}*{Corollary 1.5} and \cite{ijz}*{Proposition 4.3}) would provide the following Kawamata--Miyaoka type inequality
    \[
        c_1(X)^2\cdot \alpha_1\cdots\alpha_{n-2}\leq \frac{2(1+\epsilon)}{\epsilon}\,\hat c_2(X)\cdot\alpha_1\cdots\alpha_{n-2}
    \]
    for a collection of nef and big Bott--Chern classes $\alpha_i$.
\end{rem}

\begin{ack}
    The author would like to thank Jie Liu and Wenhao Ou for their invitations to CAS last year and this summer respectively, and the institute for its hospitality. During these two visits, the essential progress in \cite{liu2025} and in the present paper was made respectively. He would also like to thank Kenta Hashizume, Masataka Iwai, Chen Jiang, Jie Liu and Wenhao Ou for useful suggestions and discussions.
    
    The author is supported by the National Key Research and Development Program of China (No. 2023YFA1009801) in part and by NSFC (No. 12571048).  
\end{ack}

\begin{ai}
The author used ChatGPT-5.6 Sol for idea testing, literature search and summarization,  verification of arguments, development of proof details,  and language editing.  The paper was written by the author (and polished by AI), who takes full responsibility for its accuracy and content.
\end{ai}

\section{Preliminaries}

Throughout this paper, we work over the complex number field $\mathbb C$. We will freely use the basic notation in \cites{kollar-mori,lazarsfeld}.

\subsection{Singularities}

Let $X$ be a normal variety such that $K_X$ is $\mathbb Q$-Cartier. Let $f\colon Y\to X$ be a resolution of singularities and write
\[
    K_Y=f^*K_X+\sum_Ea(E,X)E,
\]
where $E$ runs over all prime divisors on $Y$ and $a(E,X)\in \mathbb Q$. We say that $X$ has \emph{canonical} (resp. \emph{klt}, \emph{$\epsilon$-lc} for some fixed $0<\epsilon\leq 1$) singularities if $a(E,X)\geq 0$ (resp. $a(E,X)>-1$, $a(E,X)\geq \epsilon-1$) for any exceptional divisor $E$ over $X$. For simplicity, we say that $X$ is \emph{canonical}, \emph{klt}, or \emph{$\epsilon$-lc}, respectively. It is obvious that $\epsilon$-lc singularities are klt, and $1$-lc singularities are canonical.

\subsection{$\mathbb Q$-variant of Langer's inequality}\label{sub.langerineq}

Let $X$ be a klt projective variety of dimension $n\geq 2$. Then in codimension 2, $X$ has only quotient singularities and admits a $\mathbb{Q}$-variety structure (see \cite{gkpt}*{\S\,3.2 and \S\,3.6}). Let $\mathcal{E}$ be a reflexive sheaf on $X$. As in \cite{gkpt}*{\S\,3.7}, we can define the \emph{orbifold second Chern class} $\hat{c}_2(\mathcal{E})$, which is a symmetric $\mathbb{Q}$-multilinear map:
\[
 \hat{c}_2(\mathcal{E})\colon N^1(X)_{\mathbb{Q}}^{\times (n-2)} \longrightarrow \mathbb{Q},\quad (\alpha_1,\dots,\alpha_{n-2})\longmapsto \hat{c}_2(\mathcal{E})\cdot \alpha_1\cdots\alpha_{n-2}.
\]
On the other hand, as $X$ is $\mathbb{Q}$-factorial in codimension $2$, for any two $\mathbb{Q}$-divisor classes $\beta$ and $\gamma$ on $X$, the product $\beta\cdot \gamma$ is also a well-defined symmetric $\mathbb{Q}$-multilinear map:
\[
 \beta\cdot \gamma\colon N^1(X)_{\mathbb{Q}}^{\times (n-2)}\longrightarrow \mathbb{Q},\quad (\alpha_1,\dots,\alpha_{n-2})\longmapsto \beta\cdot \gamma\cdot \alpha_1\cdots\alpha_{n-2}.
\]
Then the \emph{$\mathbb{Q}$-Bogomolov discriminant} $\hat{\Delta}(\mathcal{E})$ of a  rank $r$ reflexive sheaf $\mathcal{E}$ is defined to be a symmetric $\mathbb{Q}$-multilinear map:
	\[
	\hat{\Delta}(\mathcal{E})\coloneqq 2r \hat{c}_2(\mathcal{E}) - (r-1) c_1(\mathcal{E})^2.
	\]
Let $\alpha_1,\dots, \alpha_{n-1}$ be a collection of nef classes. 
According to \cite{keel-matsuki-mckernan}*{Lemma~6.5}, for any semistable reflexive sheaf $\mathcal{E}$ with respect to $(\alpha_1,\dots, \alpha_{n-1})$ and $\alpha_1 \cdots\alpha_{n-1}\not\equiv 0$, the following \emph{$\mathbb{Q}$-Bogomolov--Gieseker inequality} holds:
\begin{equation}\label{eq.BGineq}
 \hat{\Delta}(\mathcal{E})\cdot \alpha_1\cdots \alpha_{n-2} \geq 0.
\end{equation}

For any birational modification $f\colon Y\to X$, we can define the pushforward of the orbifold second Chern class $f_*\hat c_2(Y)$ by 
\[
    f_*\hat c_2(Y) \cdot \alpha_1\cdots\alpha_{n-2}\coloneq \hat c_2(Y) \cdot f^*\alpha_1\cdots f^*\alpha_{n-2},
\]
which is compatible with $\hat c_2(X)$ if $f$ is a $\mathbb Q$-factorization.

\begin{prop}\label{prop.factorization}
    Let $X$ be a klt projective variety and $f\colon Y\to X$ be a $\mathbb Q$-factorization. Then $\hat c_2(X)=f_*\hat c_2(Y)$.
\end{prop}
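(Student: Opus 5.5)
The plan is to show that both sides agree on every tuple $(\alpha_1,\dots,\alpha_{n-2})$ of classes in $N^1(X)_{\mathbb Q}$. To do this, I will choose general complete intersection surfaces in both $X$ and $Y$, and compare the orbifold Chern numbers computed there.

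\textbf{Reduction to very ample classes.} Both sides are symmetric $\mathbb Q$-multilinear in the $\alpha_i$. Every class in $N^1(X)_{\mathbb Q}$ is a difference of ample classes, so by multilinearity and scaling it suffices to treat $\alpha_i=[H_i]$ with $H_i$ very ample Cartier divisors on $X$.

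\textbf{Cutting down to surfaces.} The singularities of $X$ in codimension $2$ are quotient singularities, by the $\mathbb Q$-variety structure of \cite{gkpt}. Since $f$ is a $\mathbb Q$-factorization, it is small, so its exceptional locus has codimension at least $2$ in $Y$. Its image $f(\Exc(f))$ likewise has codimension at least $2$ in $X$, and I expect it to have codimension at least $3$. The reason is that at a point of codimension $2$ the variety $X$ has a quotient singularity, hence is $\mathbb Q$-factorial there, and a small modification is an isomorphism over a $\mathbb Q$-factorial locus. Consequently $f$ is an isomorphism over an open set $X^\circ\subset X$ whose complement has codimension at least $3$.

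Take general members $H_i\in|H_i|$ and set $S=H_1\cap\dots\cap H_{n-2}$. Then $S$ is a normal surface with only quotient singularities and $S\subset X^\circ$. Its preimage $S'=f^{-1}(S)$ is a complete intersection of the base-point-free divisors $f^*H_i$ on $Y$, and $f|_{S'}\colon S'\to S$ is an isomorphism.

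\textbf{Comparing the numbers.} By the construction in \cite{gkpt}*{\S\,3.7}, $\hat c_2(X)\cdot H_1\cdots H_{n-2}$ equals the orbifold second Chern number of the restricted orbifold sheaf. This sheaf is $\mathcal T_X$ restricted to the orbifold structure on $S$, which is computed in a neighbourhood of $S$ in $X$. For the same reason, $\hat c_2(Y)\cdot f^*H_1\cdots f^*H_{n-2}$ is computed on a neighbourhood of $S'$ in $Y$. Since $f$ identifies neighbourhoods of $S'$ and $S$ over $X^\circ$, it identifies the orbifold charts and the reflexive tangent sheaves, $f^*\mathcal T_X\cong\mathcal T_Y$ there. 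Hence the two numbers coincide. By the definition of $f_*\hat c_2(Y)$, this is exactly the claim.

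\textbf{Main obstacle.} The delicate step is the codimension-$3$ statement for $f(\Exc(f))$, and I would handle it as follows. Klt singularities in codimension $2$ are quotient singularities, and quotient singularities are $\mathbb Q$-factorial. Over a $\mathbb Q$-factorial open set $V$, the small birational projective morphism $f^{-1}(V)\to V$ must be an isomorphism: take an $f$-ample divisor $A$ on $Y$; its pushforward $f_*A$ is $\mathbb Q$-Cartier, so $A\equiv_f 0$, which forces $f$ to be finite and hence an isomorphism.

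If one prefers to avoid this, an alternative is to invoke the intrinsic definition of $\hat c_2$ as an intersection number on any open subset with complement of codimension at least $3$, if \cite{gkpt} provides it in that form. Either way, the essential input is that $\hat c_2$ only depends on the variety away from a codimension-$3$ set, together with the fact that $f$ is an isomorphism there.
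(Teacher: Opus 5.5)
Your proposal is correct and follows essentially the same route as the paper, which uses \cite{gkpt}*{Theorem 3.13.2} to reduce to a general complete intersection surface, where $f$ becomes the identity because klt surfaces are $\mathbb Q$-factorial; your codimension-$3$ argument is a more explicit version of that step. That step in fact needs no $\mathbb Q$-factoriality: every fibre of $f$ over $f(\Exc(f))$ is positive-dimensional, so smallness ($\dim \Exc(f)\le n-2$) already forces $\dim f(\Exc(f))\le n-3$.
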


\begin{proof}
    By \cite{gkpt}*{Theorem 3.13.2}, we can reduce to the case where $X$ is a klt surface. Then $X$ is $\mathbb Q$-factorial, so the $\mathbb Q$-factorization $f\colon Y\to X$ is nothing but the identity.
\end{proof}

The following inequality generalizes the $\mathbb{Q}$-Bogomolov--Gieseker inequality to the non-semistable cases, and is commonly referred to as the \emph{$\mathbb Q$-variant of Langer's inequality} (see, for example, \cite{langer2004}*{Theorem 5.1} and \cite{jiang-liu-liu}*{\S~3.1} for details).

\begin{thm}\label{thm.LangerIneqII}
    Let $\mathcal{E}$ be a rank $r$ reflexive sheaf on a klt projective variety $X$ of dimension $n\geq 2$. Let $(\alpha_1,\dots,\alpha_{n-1})$ be a collection of nef classes. Then we have
	\[
	(\alpha_1\cdots \alpha_{n-2} \cdot \alpha_{n-1}^2) (\hat{\Delta}(\mathcal{E})\cdot \alpha_1\cdots \alpha_{n-2}) + r^2 (\mu_{\max} - \mu) (\mu-\mu_{\min}) \geq 0,
	\]
	where $\mu$ (resp. $\mu_{\max}$ and $\mu_{\min}$) is the slope (resp. maximal slope and minimal slope) of $\mathcal{E}$ with respect to $(\alpha_1,\dots,\alpha_{n-1})$.
\end{thm}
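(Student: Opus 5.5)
The plan is to follow the standard route of reducing to Bogomolov--Gieseker on the graded pieces of a Harder--Narasimhan filtration, which is the usual proof of Langer's inequality. By Proposition~\ref{prop.factorization} and the definition of the pushforward class, nothing needs to be reduced at the level of $X$. The point is that $\hat\Delta$ and the slopes are computed on the $\mathbb Q$-variety structure in codimension two, so all quantities are defined. Fix the nef collection $(\alpha_1,\dots,\alpha_{n-1})$. If $\alpha_1\cdots\alpha_{n-1}\equiv 0$, then all slopes vanish and the first term must be handled directly. Since $\alpha_{n-1}^2\cdot\alpha_1\cdots\alpha_{n-2}$ is then zero as well, the inequality becomes $0\ge 0$, once we check that $\alpha_1\cdots\alpha_{n-2}\cdot\alpha_{n-1}^2$ is a component of the degree against which the slopes are computed. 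We may therefore assume $\alpha_1\cdots\alpha_{n-1}\not\equiv0$, so that Harder--Narasimhan filtrations exist for reflexive sheaves on the klt variety (semistability with respect to a movable class).

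Take the HN filtration $0=\mathcal E_0\subset\cdots\subset\mathcal E_k=\mathcal E$ with saturated pieces, and let $\mathcal G_i=(\mathcal E_i/\mathcal E_{i-1})^{**}$, of rank $r_i$ and slope $\mu_i$, with $\mu_1>\cdots>\mu_k$. Each $\mathcal G_i$ is semistable, so \eqref{eq.BGineq} gives $\hat\Delta(\mathcal G_i)\cdot\alpha_1\cdots\alpha_{n-2}\ge0$. Next, use the additivity of orbifold Chern classes on the $\mathbb Q$-variety in codimension two. The filtration is exact off a codimension-two set, and the torsion-free quotients differ from their reflexive hulls only in codimension two; this only \emph{decreases} $\hat c_2$ of the hull relative to the quotient, i.e. $\hat c_2(\mathcal E)\ge\sum_{i<j}c_1(\mathcal G_i)c_1(\mathcal G_j)+\sum\hat c_2(\mathcal G_i)$ against nef classes. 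With this, the standard algebraic identity yields
\[
\frac{\hat\Delta(\mathcal E)}{r}\ \ge\ \sum_i\frac{\hat\Delta(\mathcal G_i)}{r_i}-\frac1r\sum_{i<j}r_ir_j\Big(\frac{c_1(\mathcal G_i)}{r_i}-\frac{c_1(\mathcal G_j)}{r_j}\Big)^2
\]
as forms on $\alpha_1\cdots\alpha_{n-2}$.

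Next, bound the squared differences by the Hodge index theorem applied on the nef classes, which holds on a resolution by pulling back the $\alpha$'s. For a divisor class $\xi$ we have $(\alpha_1\cdots\alpha_{n-2}\alpha_{n-1}^2)(\xi^2\alpha_1\cdots\alpha_{n-2})\le(\xi\alpha_1\cdots\alpha_{n-1})^2$. Applying this with $\xi=c_1(\mathcal G_i)/r_i-c_1(\mathcal G_j)/r_j$ gives the bound $(\mu_i-\mu_j)^2$. Combining these reduces the statement to the elementary inequality
\[
\sum_{i<j}r_ir_j(\mu_i-\mu_j)^2\ \le\ r^2(\mu_1-\mu)(\mu-\mu_k),
\]
where $\mu=\sum r_i\mu_i/r$. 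The left side equals $r\sum r_i(\mu_i-\mu)^2$. Since each $\mu_i\in[\mu_k,\mu_1]$, we have $(\mu_i-\mu)^2\le(\mu_1-\mu)(\mu-\mu_k)+(\mu_1+\mu_k-2\mu)(\mu_i-\mu)$, and summing against $r_i$ kills the linear term.

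I expect the main obstacle to be justifying the Chern class comparison in the second step in the orbifold setting. Concretely, we need $\hat c_2$ to be additive on extensions and lower semicontinuous under passage to reflexive hulls, both for the $\mathbb Q$-Chern classes of \cite{gkpt}. I would handle this by cutting down with general complete intersection surfaces in very ample multiples approximating the $\alpha$'s, using \cite{gkpt}*{Theorem 3.13.2} as in Proposition~\ref{prop.factorization}, and then working on a klt surface with local orbifold charts. On the charts the quotients are honest sheaves and $c_2(\mathcal Q^{**})=c_2(\mathcal Q)-\mathrm{length}(\mathcal Q^{**}/\mathcal Q)$. Nef classes are then handled by perturbing to ample classes and taking limits.
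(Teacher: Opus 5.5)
Your proposal is correct, and it is the standard proof of Langer's inequality: take the Harder--Narasimhan filtration, apply the Bogomolov--Gieseker inequality \eqref{eq.BGineq} to the reflexive graded pieces, use the Hodge index bound $(\alpha_1\cdots\alpha_{n-2}\cdot\alpha_{n-1}^2)(\xi^2\cdot\alpha_1\cdots\alpha_{n-2})\le(\mu_i-\mu_j)^2$, and finish with the elementary estimate $\sum_{i<j}r_ir_j(\mu_i-\mu_j)^2\le r^2(\mu_{\max}-\mu)(\mu-\mu_{\min})$. This is exactly the argument the paper relies on: it cites Langer's Theorem 5.1 and \S 3.1 of Jiang--Liu--Liu rather than giving a proof. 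The one point specific to the klt setting is the comparison $\hat c_2(\mathcal E)\ge\sum_i\hat c_2(\mathcal G_i)+\sum_{i<j}c_1(\mathcal G_i)c_1(\mathcal G_j)$ for the fixed filtration. You handle it correctly by cutting down to general klt complete intersection surfaces and using $c_2(\mathcal Q^{**})=c_2(\mathcal Q)-\mathrm{length}(\mathcal Q^{**}/\mathcal Q)$ on the orbifold charts. The perturbation to ample classes is harmless, provided it is applied only to $\alpha_1,\dots,\alpha_{n-2}$ in this comparison step, where the filtration stays fixed.
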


Let $X$ be a $\mathbb Q$-factorial klt projective variety of dimension $n\geq 2$ such that $-K_X$ is nef. Then the \emph{pseudoeffectivity} of $\hat c_2(X)$ follows directly from the generic nefness of $\mathcal T_X$ (see \cite{ou}*{Theorem 1.3}), which is given by \cite{ou}*{Corollary 1.5} when $X$ is smooth in codimension 2. We provide a detailed proof of the general case, although it should be well known to experts.

\begin{prop}[\cite{ou}*{Corollary 1.5}]\label{prop.pseffofc2}
    Let $X$ be a $\mathbb Q$-factorial klt projective variety of dimension $n\geq 2$ such that $-K_X$ is nef. Then for any nef divisors $D_1,\dots, D_{n-2}$, 
    \[
        \hat c_2(X)\cdot D_1\cdots D_{n-2}\geq 0.
    \]
\end{prop}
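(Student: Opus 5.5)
We reduce to the case where $D_1,\dots,D_{n-2}$ are ample, and then apply Theorem~\ref{thm.LangerIneqII} to $\mathcal E=\mathcal T_X$ (rank $n$) with respect to suitable classes. Both sides of the inequality are multilinear in the $D_i$, and every nef class is a limit of ample $\mathbb Q$-classes. Replacing $D_i$ by $D_i+tH$ for an ample $H$ and letting $t\to 0$, it therefore suffices to treat ample $\mathbb Q$-divisors, and after rescaling, very ample ones.

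For ample $D_i$ we take $\alpha_i=D_i$ for $i\le n-2$ and $\alpha_{n-1}=H$ ample, so that $\alpha_1\cdots\alpha_{n-1}\not\equiv 0$. The key input is the generic nefness of $\mathcal T_X$ from \cite{ou}*{Theorem 1.3}, which holds for $\mathbb Q$-factorial klt $X$ with $-K_X$ nef. It gives $\mu_{\min}(\mathcal T_X)\ge 0$ with respect to $(\alpha_1,\dots,\alpha_{n-1})$. Indeed, every torsion-free quotient of $\mathcal T_X$ restricted to a general complete intersection curve of sufficiently divisible multiples of the $\alpha_i$ has nonnegative degree. Ou states this for complete intersection curves of ample divisors, and by approximation it passes to slopes defined by the collection of ample classes. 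Write $\mu=\mu(\mathcal T_X)=(-K_X\cdot\alpha_1\cdots\alpha_{n-1})/n\ge 0$ and $M=\mu_{\max}$. Theorem~\ref{thm.LangerIneqII} then gives
\[
(\alpha_1\cdots\alpha_{n-2}\cdot H^2)\,\hat\Delta(\mathcal T_X)\cdot D_1\cdots D_{n-2}\ \ge\ -n^2(M-\mu)(\mu-\mu_{\min}).
\]
This bound alone does not yield $\hat c_2\ge 0$, since $\hat\Delta=2n\hat c_2-(n-1)c_1^2$. I would instead use the following more direct route.

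Take $S$ to be a general complete intersection surface cut out by very ample multiples of $D_1,\dots,D_{n-2}$. Then $S$ is a klt surface with quotient singularities, and by \cite{gkpt}*{Theorem 3.13.2} we have $\hat c_2(X)\cdot D_1\cdots D_{n-2}=c\cdot\hat c_2(\mathcal T_X|_S)$ for a positive constant $c$. On $S$, let $0=\mathcal E_0\subset\cdots\subset\mathcal E_k=\mathcal T_X|_S$ be the Harder--Narasimhan filtration with respect to $H|_S$. By generic nefness, each graded piece $\mathcal G_j$ is semistable with slope $\ge 0$, and the restrictions $c_1(\mathcal G_j)$ are nef-like in the sense that $c_1(\mathcal G_j)\cdot H\ge 0$. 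The orbifold second Chern class is additive up to the cross terms:
\[
\hat c_2=\sum_j\hat c_2(\mathcal G_j)+\sum_{i<j}c_1(\mathcal G_i)c_1(\mathcal G_j).
\]
Each $\hat c_2(\mathcal G_j)\ge \frac{r_j-1}{2r_j}c_1(\mathcal G_j)^2$ by the Bogomolov--Gieseker inequality~\eqref{eq.BGineq}. What remains is to show that the quadratic form in the $c_1(\mathcal G_j)$ is nonnegative. This follows from two facts: $\sum_j c_1(\mathcal G_j)=-K_S|$ restricted is nef, and the quotients $\mathcal T_X|_S/\mathcal E_i$ have $c_1$ that is pseudoeffective on $S$. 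The latter is the generic nefness statement in its strong form, as in Ou's proof \cite{ou}*{Corollary 1.5}.

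The main obstacle is exactly this last quadratic estimate. Controlling the terms $c_1(\mathcal G_j)^2$, which may be negative, requires knowing that partial sums $c_1(\mathcal T/\mathcal E_i)$ are nef (not merely of nonnegative $H$-degree) on $S$. I would get this by rewriting $\hat c_2$ as $\sum_i c_1(\mathcal E_i/\mathcal E_{i-1})\cdot c_1(\mathcal T/\mathcal E_i)+\sum_j\hat c_2(\mathcal G_j)$ and applying the Hodge index theorem on $S$. To get the needed positivity I would invoke the nefness of $-K_X$ and Ou's theorem that the quotients have pseudoeffective determinant after restriction to movable curves. Finally, $\mathbb Q$-factoriality is used so that these $c_1$ are $\mathbb Q$-Cartier and intersection numbers on $X$ restrict correctly to $S$.
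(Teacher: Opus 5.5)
There is a genuine gap: your second route never proves its key step, and the inputs you propose do not supply it. Write $L\coloneq c_1(X)|_S$ and $x_j\coloneq c_1(\mathcal G_j)$, so that $\sum_j x_j=L$. Your lower bound then collapses to $\hat c_2\ge \frac12L^2-\sum_j\frac{1}{2r_j}x_j^2$, so what you need is $\sum_j x_j^2/r_j\le L^2$. The inputs you name do not give this:
\begin{enumerate}
\item Generic nefness (Ou) only gives nonnegativity of $c_1$ of quotients of $\mathcal T_X$ against curves, i.e.\ at best pseudoeffectivity, not nefness on $S$.
\item The $\mathcal G_j$ are subquotients, so the $x_j$ themselves need not even be pseudoeffective.
\item You do not say how nef partial sums together with Hodge index would close the estimate.
\end{enumerate}
In fact, with an arbitrary polarization $H$ the Hodge index bound $x_j^2\le (x_j\cdot H)^2/H^2$ is too weak. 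On $\mathbb P^1\times\mathbb P^1$ with $H$ of bidegree $(1,t)$ and $t>1$, the HN pieces are $\mathcal O(2,0)$ and $\mathcal O(0,2)$, and $\sum_j (x_j\cdot H)^2/(r_jH^2)=2t+2/t$, which exceeds $L^2=8$ for $t\gg 0$. (A smaller issue: the HN quotients are only torsion-free, so additivity of $\hat c_2$ holds only as an inequality via reflexive hulls.)

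The missing idea is the choice of polarization. Your first route was the right one, and you abandoned it too early. Take $\alpha_{n-1}=H_a\coloneq c_1(X)+aH$ with $a>0$ rational; it is ample because $-K_X$ is nef. Then use generic nefness twice. It gives $\mu_{\min}\ge 0$, and, applied to the quotient of $\mathcal T_X$ by its maximal destabilizing subsheaf, it also gives $\mu_{\max}\le c_1(X)\cdot H_a\cdot D_1\cdots D_{n-2}=n\mu$. Hence $n^2(\mu_{\max}-\mu)(\mu-\mu_{\min})\le n^2\mu(\mu_{\max}-\mu)\le (n-1)(n\mu)^2$, and Theorem \ref{thm.LangerIneqII} yields
\[
2n\,\hat c_2(X)\cdot D_1\cdots D_{n-2}\ge (n-1)\Bigl(c_1(X)^2\cdot D_1\cdots D_{n-2}-\frac{(c_1(X)\cdot H_a\cdot D_1\cdots D_{n-2})^2}{D_1\cdots D_{n-2}\cdot H_a^2}\Bigr).
\]
The right-hand side tends to $0$ as $a\to 0$; this is the paper's proof. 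In your surface language it is exactly the missing estimate. Hodge index with respect to $H_a|_S$, together with $x_j\cdot H_a\ge 0$ and $r_j\ge 1$, gives $\sum_j x_j^2/r_j\le \sum_j (x_j\cdot H_a)^2/(r_jH_a^2)\le (L\cdot H_a)^2/H_a^2$, which tends to $L^2$ as $a\to 0$. No nefness of partial sums is needed, only that the polarization approaches $c_1(X)$.
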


\begin{proof}
    By taking limits, it suffices to prove the inequality for ample divisors $D_1,\dots, D_{n-2}$. Let $H$ be an ample divisor on $X$ and $H_a\coloneq c_1(X)+aH$ for some rational number $a>0$. Consider the slope of $\mathcal T_X$ with respect to $(D_1,\dots,D_{n-2}, H_a)$. Then the generic nefness of $\mathcal T_X$ \cite{ou}*{Theorem 1.3} provides that 
    \[
        0\leq \mu_{\min}(\mathcal T_X)\leq \mu(\mathcal T_X)\leq \mu_{\max}(\mathcal T_X)\leq c_1(X)\cdot H_a \cdot D_1\cdots D_{n-2}=n\mu(\mathcal T_X),
    \]
    and hence $n^2 (\mu_{\max} - \mu) (\mu-\mu_{\min})\leq n^2\mu(\mu_{\max} - \mu)\leq (n-1)(n\mu)^2$.
    Applying Theorem \ref{thm.LangerIneqII} to $\mathcal T_X$ and $(D_1,\dots,D_{n-2}, H_a)$ yields that
    \[
        (D_1\cdots D_{n-2}\cdot H_a^2)(\hat{\Delta}(\mathcal{T}_X)\cdot D_1\cdots D_{n-2})+ (n-1)(c_1(X)\cdot H_a \cdot D_1\cdots D_{n-2})^2\geq 0,
    \]
    where $c_1(X)\cdot H_a \cdot D_1\cdots D_{n-2}=c_1(X)^2\cdot D_1\cdots D_{n-2}+ac_1(X)\cdot H\cdot D_1\cdots D_{n-2}$. As $D_1\cdots D_{n-2}\cdot H_a^2>0$, dividing $D_1\cdots D_{n-2}\cdot H_a^2$ yields that
    \[
        2n \hat c_2(X)\cdot D_1\cdots D_{n-2}\geq (n-1)\left(c_1(X)^2\cdot D_1\cdots D_{n-2}-\frac{(c_1(X)\cdot H_a \cdot D_1\cdots D_{n-2})^2}{D_1\cdots D_{n-2}\cdot H_a^2}\right),
    \]
    and the right hand side tends to $0$ after taking limit $a\to 0$.
\end{proof}

\subsection{Basic notions of foliations}
	
    In this subsection, we gather some basic notions and facts regarding foliations on normal varieties. A more detailed explanation can be found in \cite{araujo-druel}, \cite{druel2021}*{\S\,3} and the references therein. 
    
    \begin{defn}
        A {\it foliation} on a normal variety $X$ is a non-zero coherent subsheaf $\mathcal{F}$ of the tangent sheaf $\mathcal{T}_X$ such that 
        \begin{enumerate}
    	\item $\mathcal{T}_X/\mathcal{F}$ is torsion-free, and	
    	\item $\mathcal{F}$ is closed under the Lie bracket.
        \end{enumerate} 
        The {\it canonical divisor} of a foliation $\mathcal{F}$ is any Weil divisor $K_{\mathcal{F}}$ on $X$ such that $\det(\mathcal{F})\cong \mathcal{O}_X(-K_{\mathcal{F}})$. 
     
        Let $X_{0}\subset X_{\reg}$ be the largest open subset over which $\mathcal{T}_X/\mathcal{F}$ is locally free. A \emph{leaf} of $\mathcal{F}$ is a maximal connected and immersed holomorphic submanifold $L\subset X_{0}$ such that $\mathcal{T}_L=\mathcal{F}|_L$. A leaf is called \emph{algebraic} if it is open in its Zariski closure, and a foliation $\mathcal{F}$ is said to be \emph{algebraically integrable} if its leaves are algebraic. 
    \end{defn}

    Let $\mathcal{F}$ be an algebraically integrable foliation on a normal projective variety $X$. Then there exists a commutative diagram (see \cite{druel2021}*{\S\,3.6}), which is called the \emph{family of leaves}: 

    \begin{equation}\label{eq.familyofleaves}
        \begin{tikzcd}[row sep=large, column sep=large]
    	U \arrow[r,"e"] \arrow[d,"f"]
    	& X \\
    	T
        \end{tikzcd}
    \end{equation}
    where $U$ and $T$ are normal projective varieties, $e$ is birational and $f$ is an equidimensional fibration, and the image of a general fiber $F$ of $f$ is the closure of a leaf of $\mathcal{F}$ with $\dim F=\rank \mathcal F$. If $F$ is rationally connected, then $\mathcal F$ is said to be \emph{a foliation with rationally connected general leaves}.

    The following proposition restates \cite{ou}*{Proposition 3.1}, which is a special case of \cite{druel2017}*{Proposition 4.1}. It plays an important role in \cite{liu2025}, and it will play a similar role in this paper.

    \begin{prop}\label{prop.pseff}
        Let $f\colon U\to T$ be a surjective morphism between normal projective varieties, where $U$ is $\mathbb Q$-factorial. Let $F$ be a general fiber of $f$. Let $\Delta$ be an effective $\mathbb Q$-divisor on $U$ such that 
        $(F,\Delta|_F)$ is log canonical. If $\kappa(F, K_F+\Delta|_F)\geq 0$, then $K_{\mathcal G}+\Delta$ is pseudoeffective, where $\mathcal G$ is the foliation induced by $f$. 
    \end{prop}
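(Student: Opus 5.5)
The plan is to compare $K_{\mathcal G}$ with the relative canonical divisor $K_{U/T}$. Then we use weak positivity of direct images of relative pluri-log-canonical sheaves to get pseudoeffectivity of $K_{U/T}+\Delta$, after a base change that kills the difference between the two.

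\emph{Step 1: reductions.} Pseudoeffectivity of a $\mathbb Q$-Cartier divisor on $U$ can be tested after a generically finite pullback, and after removing a closed subset of codimension at least two on $U$. Replacing $f$ by its Stein factorization, which changes neither $\mathcal G$ nor the general fiber up to connected components, we may assume $f$ has connected fibers. By flattening, i.e.\ taking a birational modification $T'\to T$ and the normalization of the main component of $U\times_T T'$, we may also assume $f$ is equidimensional. Since $U$ is $\mathbb Q$-factorial, $K_{\mathcal G}+\Delta$ is $\mathbb Q$-Cartier. Pushing forward a pseudoeffective class along the birational modification of $U$ preserves pseudoeffectivity, and the foliation on the modified variety restricts to $\mathcal G$ on the common open set. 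Hence the statement for the modified family implies the original one. On the equidimensional family, the standard formula reads
\[
K_{\mathcal G}=K_{U/T}-R(f),\qquad R(f)=\sum_{P}\bigl(f^*P-(f^*P)_{\mathrm{red}}\bigr),
\]
where $P$ runs over prime divisors of $T$; this holds in codimension one on $U$.

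\emph{Step 2: base change to remove $R(f)$.} Take a Kawamata-type finite cover $T'\to T$, followed by a resolution, that is sufficiently ramified along every prime divisor $P$ over which $f^*P$ is non-reduced. Let $U'$ be the normalization of $U\times_T T'$, with induced maps $g\colon U'\to U$ and $f'\colon U'\to T'$. Then the fibers of $f'$ over codimension-one points are generically reduced. The foliation $\mathcal G'$ induced by $f'$ coincides with the pullback foliation $g^{-1}\mathcal G$, and since $g$ is finite and $\mathcal G'$ is tangent to the fibers, the usual computation for foliations under finite covers gives
\[
K_{\mathcal G'}=g^*K_{\mathcal G},\qquad K_{\mathcal G'}=K_{U'/T'}
\]
in codimension one. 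Set $\Delta'=g^*\Delta$. The general fiber of $f'$ is isomorphic to $F$ with boundary $\Delta|_F$, so $(F,\Delta|_F)$ is still lc with $\kappa(F,K_F+\Delta|_F)\ge 0$. It therefore suffices to show that $K_{U'/T'}+\Delta'$ is pseudoeffective, after passing to a resolution of $U'$ on which we replace $\Delta'$ by a boundary agreeing with it over the general fiber.

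\emph{Step 3: weak positivity.} By the weak positivity theorem for lc pairs (Campana, Fujino, Viehweg type), for $m$ sufficiently divisible the sheaf $f'_*\mathcal O(m(K_{U'/T'}+\Delta'))$ is weakly positive. It is nonzero because $\kappa(F,K_F+\Delta|_F)\ge0$. The nonzero evaluation map
\[
f'^*f'_*\mathcal O\bigl(m(K_{U'/T'}+\Delta')\bigr)\to \mathcal O\bigl(m(K_{U'/T'}+\Delta')\bigr)
\]
then shows that $m(K_{U'/T'}+\Delta')+\epsilon A$ is effective for every ample $A$ and every $\epsilon>0$. Hence $K_{U'/T'}+\Delta'=g^*(K_{\mathcal G}+\Delta)$ is pseudoeffective, and so is $K_{\mathcal G}+\Delta$, since $g$ is finite and $\tfrac{1}{\deg g}g_*g^*$ is the identity.

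\emph{Main obstacle.} The main difficulty is Step 2. One must check that after the base change no new non-reduced codimension-one fibers or extra ramification terms appear. One must also check that the discrepancy between $K_{\mathcal G'}$ and $K_{U'/T'}$, together with the resolution needed to apply weak positivity with an lc boundary, only introduces divisors that are exceptional or vertical with non-negative coefficients. If a direct verification fails, I would fall back on Druel's argument in \cite{druel2017}*{Proposition 4.1}, which controls exactly these vertical correction terms.
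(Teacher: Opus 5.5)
Your outline is correct and is essentially the argument behind this statement, which the paper does not reprove but quotes from Ou's Proposition 3.1 (a special case of Druel's Proposition 4.1): on an equidimensional model write $K_{\mathcal G}=K_{U/T}-R(f)$ in codimension one, kill $R(f)$ by a sufficiently ramified finite base change whose ramification is vertical (hence $\mathcal G$-invariant, so $K_{\mathcal G'}=g^*K_{\mathcal G}$), and conclude from weak positivity of $f'_*\mathcal O(m(K_{U'/T'}+\Delta'))$, which is nonzero because $\kappa(F,K_F+\Delta|_F)\geq 0$. The one thing to tidy is Step 2: keep $T'$ a normal finite cover when forming $U'$, since resolving $T'$ first can create non-reduced fibers over its exceptional divisors and makes $g$ non-finite; only afterwards pass to a resolution to apply weak positivity, using as boundary the strict transform of the horizontal part of $\Delta'$ plus each horizontal exceptional divisor with coefficient $\max(0,-a_i)$, where $a_i$ is its discrepancy for $(F,\Delta|_F)$, and then push forward, adding back at the end the vertical part of $\Delta'$ (whose coefficients may exceed $1$).
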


\subsection{The canonical extension}

We recall the canonical extension in the singular setting from \cite{gkp}*{\S 4}. Let $X$ be a normal variety and $D$ be a $\mathbb Q$-Cartier Weil divisor on $X$. Then $c_1(D)\in H^1(X,\Omega_X^1)\cong\Ext^1(\mathcal O_X, \Omega_X^1)$ corresponds to a locally split extension
\[
    0\to \Omega_X^1\to \mathcal W_D\to \mathcal O_X\to 0.
\]
By duality, we obtain a locally split extension of $\mathcal T_X$ by $\mathcal O_X$ as
\[
    0\to \mathcal O_X\to \mathcal E_D\to \mathcal T_X\to 0,
\]
whose middle term $ \mathcal E_D$ is reflexive. In the case $D=-K_X$, we call it the \emph{canonical extension} induced by $-K_X$, denoted by
\begin{equation}\label{eq.canextdef}
     0\to \mathcal O_X\to \mathcal E_X\to \mathcal T_X\to 0.
\end{equation}
By a slight abuse of language, we also call the reflexive sheaf $\mathcal E_X$ the canonical extension.

In general, the Chern character need not be additive for a short exact sequence of reflexive sheaves (cf. \cite{langer}*{\S~5.5}) unless the sequence is $\mathbb Q$-exact, i.e., exact as $\mathbb Q$-sheaves (cf. \cite{kawamata}*{\S~2} and \cite{gkpt}*{\S~3}). In particular, additivity holds for locally split exact sequences.

\begin{prop}\label{prop.c1c2}
    Let $X$ be a $\mathbb Q$-factorial klt projective variety of dimension $n\geq 2$. Then $c_1(\mathcal E_X)=c_1(X)$ and $\hat c_2(\mathcal E_X)=\hat c_2(X)$.
\end{prop}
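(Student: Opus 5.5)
The proof rests on the fact that the canonical extension sequence \eqref{eq.canextdef} is locally split, and that $\hat c_2$ can be computed in codimension $2$. The plan is to reduce both equalities to a computation on the open subset where $X$ is a $\mathbb Q$-variety, that is, has only quotient singularities. There, the locally split sequence pulls back to an exact sequence of orbifold bundles on local orbifold charts, and the Whitney formula applies.

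\emph{First Chern class.} On $X_{\reg}$ the sequence $0\to\mathcal O_X\to\mathcal E_X\to\mathcal T_X\to 0$ is an exact sequence of vector bundles. Hence $\det\mathcal E_X|_{X_{\reg}}\cong\det\mathcal T_X|_{X_{\reg}}$. Since both determinants are reflexive rank one sheaves and $X\setminus X_{\reg}$ has codimension at least $2$, we get $\det\mathcal E_X\cong\mathcal O_X(-K_X)$. As $X$ is $\mathbb Q$-factorial, this gives $c_1(\mathcal E_X)=c_1(X)$.

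\emph{Orbifold second Chern class.} By \cite{gkpt}*{\S3.6--3.7}, $\hat c_2$ of a reflexive sheaf is determined by its restriction to a big open subset $X^\circ\subset X$ with quotient singularities. It is computed through the $\mathbb Q$-vector bundles obtained on orbifold charts $V\to X^\circ$ by taking reflexive pullbacks; by the cutting-down property \cite{gkpt}*{Theorem 3.13.2}, one may even reduce to surfaces. The key step is to check that on each chart $\gamma\colon V\to X^\circ$, with $V$ smooth, the sequence $0\to\mathcal O_V\to(\gamma^*\mathcal E_X)^{**}\to(\gamma^*\mathcal T_X)^{**}\to 0$ is still exact, so that the sequence is $\mathbb Q$-exact. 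For this I would use that the extension class is $c_1(-K_X)\in H^1(X,\Omega^{[1]}_X)$. Because $-K_X$ is $\mathbb Q$-Cartier, some multiple $-mK_X$ pulls back to a line bundle $L$ on $V$, and $(\gamma^*\mathcal E_X)^{**}$ is identified with $\mathcal E_{\frac1m L}$, the Atiyah-type extension of $\mathcal T_V$ by $\mathcal O_V$ attached to $\frac1m c_1(L)$. Concretely, the local splittings of $\mathcal W_{-K_X}$ come from logarithmic differentials $d\log$ of local generators of $\mathcal O_X(-mK_X)$, and these pull back to $d\log$ of local generators of $L$. This chart-compatible description is what \cite{gkp}*{\S4} provides, and it is the point where local splitness is essential. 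I expect this compatibility to be the main obstacle; the rest is formal.

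Granting it, the Whitney formula on $V$ gives $c_2(\mathcal E_V)=c_2(\mathcal T_V)+c_1(\mathcal O_V)c_1(\mathcal T_V)=c_2(\mathcal T_V)$. These orbifold Chern forms glue to the same orbifold class, so $\hat c_2(\mathcal E_X)\cdot D_1\cdots D_{n-2}=\hat c_2(X)\cdot D_1\cdots D_{n-2}$ for all divisors $D_i$. This is exactly the additivity for locally split sequences stated before the proposition, which can alternatively be quoted directly; the argument above is its verification in this case.
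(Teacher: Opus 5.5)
Your proposal is correct and follows essentially the paper's route: pass to orbifold charts, check that the pulled-back canonical extension stays exact (indeed split) there, apply the Whitney formula, and glue. The step you flag as the main obstacle is in fact immediate from local splitness. As in the paper, refine the $\mathbb Q$-variety structure so that each chart maps into an open set over which the sequence splits; the splitting then pulls back and survives taking reflexive hulls, so the Atiyah-class/$d\log$ identification you propose is not needed.
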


\begin{proof}
    After a refinement of $\mathbb Q$-variety structure if necessary, we may assume that on each chart of the $\mathbb Q$-variety structure, the canonical extension is split. Then passing to the smooth covers, we obtain split exact sequences of locally free sheaves, and hence the additivity of the Chern characters. These equalities are compatible on overlaps and hence glue to the corresponding equalities of the first two Chern classes.
\end{proof}

For klt varieties with nef anticanonical divisors, the canonical extension  preserves the generic nefness.

\begin{prop}\label{prop.gnef}
    Let $X$ be a $\mathbb Q$-factorial klt projective variety of dimension $n\geq 2$ such that $-K_X$ is nef. Then $\mathcal E_X$ is generically nef. In particular, $\mu_{\min}(\mathcal E_X)\geq 0$ with respect to any collection of nef divisors.
\end{prop}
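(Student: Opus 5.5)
Generic nefness here means: for any nef (equivalently, by limits, ample) classes $D_1,\dots,D_{n-1}$, every torsion-free quotient $\mathcal E_X\to\mathcal Q\neq 0$ satisfies $c_1(\mathcal Q)\cdot D_1\cdots D_{n-1}\geq 0$, i.e.\ $\mu_{\min}(\mathcal E_X)\geq 0$. The plan is to reduce this to the generic nefness of $\mathcal T_X$ from \cite{ou}*{Theorem 1.3}, using the extension \eqref{eq.canextdef}. Write $0\to\mathcal O_X\to\mathcal E_X\xrightarrow{\pi}\mathcal T_X\to 0$. By taking limits it suffices to treat ample $D_1,\dots,D_{n-1}$, and to check $\mu_{\min}\geq 0$ it is enough to consider the minimal slope quotient, which we may take stable and torsion free.

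So let $\mathcal E_X\to\mathcal Q$ be a torsion-free quotient with kernel $\mathcal K$. Compose $\mathcal O_X\to\mathcal E_X\to\mathcal Q$; there are two cases.

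\textbf{Case A:} the composite is zero. Then $\mathcal O_X\subset\mathcal K$, so $\mathcal Q$ is a quotient of $\mathcal E_X/\mathcal O_X=\mathcal T_X$. The generic nefness of $\mathcal T_X$ then gives $\mu(\mathcal Q)\geq 0$.

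\textbf{Case B:} the composite is nonzero, hence injective as a map $\mathcal O_X\hookrightarrow\mathcal Q$. Let $\mathcal Q'$ be the torsion-free quotient $\mathcal Q/\overline{\mathcal O_X}$, where the bar denotes saturation in $\mathcal Q$; the saturation is $\mathcal O_X(B)$ with $B\geq 0$ effective. If $\mathcal Q'=0$, then $\mathcal Q$ has rank one and $c_1(\mathcal Q)=B$ is effective, so its slope is $\geq 0$. Otherwise $\mathcal Q'$ is a quotient of $\mathcal E_X/\mathcal O_X=\mathcal T_X$, hence has slope $\geq 0$ by generic nefness. Then
\[
c_1(\mathcal Q)=c_1(\overline{\mathcal O_X})+c_1(\mathcal Q')\geq B+c_1(\mathcal Q'),
\]
and intersecting with the $D_i$ gives $\mu(\mathcal Q)\geq 0$. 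First Chern classes are additive in codimension one, so this step is fine on a normal $\mathbb Q$-factorial variety.

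In both cases $\mu(\mathcal Q)\geq0$, so $\mu_{\min}(\mathcal E_X)\geq 0$ for every collection of nef divisors, which is the claim. The only real input is \cite{ou}*{Theorem 1.3} applied on the $\mathbb Q$-factorial klt $X$ with $-K_X$ nef. The point to handle carefully is that generic nefness must be used in its form for arbitrary collections of nef classes, not just powers of one ample, so this needs checking in Ou's statement. Failing that, I would fall back on the multi-polarization version \cite{campana-paun} style argument, perturbing the $D_i$ to ample classes and taking limits. The extension structure itself poses no difficulty, since the dichotomy above is purely formal.
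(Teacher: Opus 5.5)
Your argument is correct, and it handles the extension step differently from the paper.

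\textbf{The paper's route.} It restricts the canonical extension to a Mehta--Ramanathan general complete intersection curve $C$. Ou's theorem gives that $\mathcal T_X|_C$ is nef. The curve-level fact that an extension of nef bundles is nef \cite{lazarsfeld}*{Theorem 6.2.12 (ii)} then gives that $\mathcal E_X|_C$ is nef. Finally, $\mu_{\min}(\mathcal E_X)\geq 0$ for nef classes is deduced by perturbing to $D_i+\varepsilon H$ and taking limits.

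\textbf{Your route.} You stay on $X$ and prove closure under extension by $\mathcal O_X$ directly for torsion-free quotients. You split into cases according to whether $\mathcal O_X\to\mathcal Q$ vanishes, and in the non-vanishing case you saturate its image. Beyond Ou's theorem, this uses only two facts: additivity of $c_1$ in codimension one, and that an effective $\mathbb Q$-Cartier divisor meets nef classes nonnegatively. All the steps check out:
\begin{itemize}
\item A nonzero map $\mathcal O_X\to\mathcal Q$ into a torsion-free sheaf is injective.
\item The composite $\mathcal E_X\to\mathcal Q'$ kills $\mathcal O_X$, so $\mathcal Q'$ is a torsion-free quotient of $\mathcal T_X$.
\item The saturation of $\mathcal O_X$ in $\mathcal Q$ has effective first Chern class.
\end{itemize}
In Case B the displayed inequality is in fact an equality, since $c_1(\overline{\mathcal O_X})=B$.

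\textbf{Comparison.} Your approach avoids curves and Lazarsfeld's lemma altogether. It directly produces the slope inequality that is actually used in the proof of Theorem \ref{thm.main}, and the same argument shows that slope-wise generic nefness is closed under arbitrary extensions. The paper's approach is shorter given the cited lemma. It yields nefness of $\mathcal E_X|_C$ on general curves, i.e.\ generic nefness in Miyaoka's original sense. You recover that only through the standard equivalence, via Mehta--Ramanathan, between the curve definition and the slope definition.

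Both proofs rest on the same external input: Ou's generic nefness of $\mathcal T_X$ for arbitrary ample polarizations $(H_1,\dots,H_{n-1})$. So the point you flag about checking the multipolarization form of \cite{ou}*{Theorem 1.3} applies equally to the paper's proof.
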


\begin{proof}
    Consider the restriction of the canonical extension \eqref{eq.canextdef} to any Mehta--Ramanathan-general curve $C$. It follows from \cite{ou}*{Theorem 1.3} that $\mathcal T_X|_C$ is nef. Then $\mathcal E_X|_C$ is also nef as an extension of nef vector bundles is nef \cite{lazarsfeld}*{Theorem 6.2.12 (ii)}. Therefore, $\mathcal E_X$ is also generically nef.

    Let $D_1,\dots, D_{n-1}$ be a collection of nef divisors and $H$ be an ample divisor on $X$. Then for any torsion-free quotient $\mathcal Q$ of $\mathcal E_X$ and any rational number $\varepsilon>0$, the generic nefness of $\mathcal E_X$ provides that
    \[
        c_1(\mathcal Q)\cdot (D_1+\varepsilon H)\cdots (D_{n-1}+\varepsilon H)\geq 0.
    \]
    In particular, $\mu_{\min}(\mathcal E_X)\geq 0$ by taking limits.
\end{proof}

In this paper, we only consider the well-known functoriality of canonical extensions in the smooth setting, which goes back to \cite{atiyah}. For the singular setting, see \cite{gkp}*{Remark 4.3}.  

\begin{prop}[Functoriality]\label{prop.functor}
    Let $j\colon Y\hookrightarrow X_{\sm}$ be an embedding, where $Y$ is a smooth subvariety of a normal projective variety $X$.  Let $D$ be a $\mathbb Q$-Cartier Weil divisor on $X$. Then there exists a commutative diagram:
    \begin{equation}\label{eq.functor}
        \begin{tikzcd}
    	0 \arrow[r] 
    	& \mathcal O_Y \arrow[r] \arrow[d,equals]
    	& \mathcal E_{j^*D} \arrow[r] \arrow[d,"\mu"]
    	& \mathcal T_{Y} \arrow[r] \arrow[d]
    	& 0\\
    	0 \arrow[r]& \mathcal O_Y \arrow[r]& j^*\mathcal E_{D} \arrow[r]& 
        j^*\mathcal T_X\arrow[r]& 0
        \end{tikzcd}
    \end{equation}
\end{prop}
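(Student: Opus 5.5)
The plan is to construct the diagram by pulling back the defining extension class and then dualizing. Since $Y\subset X_{\sm}$, the divisor $D$ is Cartier near $Y$, so $j^*D$ makes sense as a Cartier divisor on $Y$ with $c_1(j^*D)=j^*c_1(D)$. I would work with the cotangent side first. The restriction map $j^*\Omega_X^1\to\Omega_Y^1$ is surjective because $Y$ is a smooth subvariety of the smooth locus.

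The first step is to take the extension
\[
0\to\Omega_X^1\to\mathcal W_D\to\mathcal O_X\to 0
\]
attached to $c_1(D)\in H^1(X,\Omega^1_X)$. Its restriction $j^*\mathcal W_D$ is still exact, since the sequence is locally split, and it corresponds to $j^*c_1(D)\in H^1(Y,j^*\Omega_X^1)$. Pushing this extension out along $j^*\Omega^1_X\to\Omega^1_Y$ gives an extension of $\mathcal O_Y$ by $\Omega^1_Y$ whose class is the image of $j^*c_1(D)$ in $H^1(Y,\Omega^1_Y)$. Since the Atiyah/Chern class is functorial under pullback, that image equals $c_1(j^*D)$. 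This is the key point: it can be checked with Čech cocycles $d\log g_{ij}$ of transition functions, whose restrictions are $d\log(g_{ij}|_Y)$. Hence the pushout is isomorphic to $\mathcal W_{j^*D}$, and we get a commutative diagram of extensions $j^*\mathcal W_D\to\mathcal W_{j^*D}$ that is the identity on $\mathcal O_Y$ and the restriction map on $\Omega^1$.

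The second step is to dualize this diagram with $\mathcal{H}om(-,\mathcal O_Y)$. All the sheaves involved are locally free on $Y$: the restricted sequences are locally split and $Y$ lies in $X_{\sm}$. So dualizing preserves exactness and reverses the arrows. The duals of $j^*\Omega^1_X$, $\Omega^1_Y$ and $j^*\mathcal W_D$ are $j^*\mathcal T_X$, $\mathcal T_Y$ and $j^*\mathcal E_D$ respectively, the last because dualizing commutes with pullback for locally free sheaves. The dual of $\mathcal W_{j^*D}$ is $\mathcal E_{j^*D}$ by definition. The dual of $j^*\Omega^1_X\to\Omega^1_Y$ is the differential $\mathcal T_Y\to j^*\mathcal T_X$, and the dual of the identity on $\mathcal O_Y$ is the identity. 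This yields \eqref{eq.functor}, with $\mu$ the dual of the pushout map.

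The only step that requires real care is identifying the class of the pushout with $c_1(j^*D)$, including the sign and normalization conventions relating $c_1$ to the Ext class. Because the same convention is used on $X$ and on $Y$, naturality of $d\log$ settles it. If $D$ is only $\mathbb Q$-Cartier, I would apply the argument to $mD$ for Cartier $mD$, using that the class is defined by $c_1(mD)/m$ and scales linearly; on $X_{\sm}$, however, $D$ is already Cartier.
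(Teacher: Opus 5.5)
Your argument is correct. The paper itself gives no proof of this proposition: it quotes it as the well-known functoriality of canonical extensions going back to Atiyah, and points to Greb--Kebekus--Peternell (Remark 4.3) for the singular setting. What you write down is the standard self-contained argument behind that citation:
\begin{enumerate}
\item restrict the locally split sequence $0\to\Omega^1_X\to\mathcal W_D\to\mathcal O_X\to 0$ to $Y$;
\item push it out along $j^*\Omega^1_X\to\Omega^1_Y$;
\item identify the resulting class with $c_1(j^*D)$ by naturality of $d\log$ on \v{C}ech cocycles;
\item dualize the resulting morphism of extensions of locally free sheaves.
\end{enumerate}
Dualizing correctly reverses the rows and produces both $\mu$ and the differential $\mathcal T_Y\to j^*\mathcal T_X$.

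Compared with the citation, your route needs nothing beyond naturality of $c_1$ on smooth varieties. That is exactly what the paper uses later: the proposition is only applied to a general leaf $C\simeq\mathbb P^1\subset X_{\sm}$ in the proof of Theorem~\ref{thm.main}. The cited singular version matters only for maps meeting the singular locus.

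Two harmless remarks. First, read $j^*D$ as the class of $j^*\mathcal O_X(D)$: as a divisor it is undefined when $Y\subset\operatorname{Supp}D$, and the extension depends only on $c_1$. Second, you never use the surjectivity of $j^*\Omega^1_X\to\Omega^1_Y$, so the same proof works for any morphism from a smooth variety into $X_{\sm}$.
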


\section{Kawamata--Miyaoka type inequality}\label{sec.KM}

As explained in the introduction, we consider the maximal destabilizing subsheaf of the canonical extension, rather than that of the tangent sheaf in \cites{ijl,liu-liu2023,liu-liu2024,jiang-liu-liu, liu2025}.

\begin{thm}\label{thm.main}
    Let $X$ be a $\mathbb Q$-factorial $\epsilon$-lc projective variety of dimension $n\geq 2$ such that $-K_X$ is nef and $0<\epsilon \leq 1$ is a real number. Then for any nef divisors $D_1,\dots,D_{n-2}$, we have 
    \[
         c_1(X)^{2} \cdot D_1\cdots D_{n-2}\leq \frac{2(1+\epsilon)}{\epsilon}\,\hat c_2(X)\cdot D_1\cdots D_{n-2}.
    \]
\end{thm}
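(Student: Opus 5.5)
Throughout, we work with $\mathcal E_X$ rather than $\mathcal T_X$. By Proposition \ref{prop.c1c2} we have $c_1(\mathcal E_X)=c_1(X)$ and $\hat c_2(\mathcal E_X)=\hat c_2(X)$, and $\mathcal E_X$ has rank $n+1$. By limits we may assume $D_1,\dots,D_{n-2}$ are ample. Fix an ample $H$ and set $H_a=c_1(X)+aH$, and compute slopes with respect to $(D_1,\dots,D_{n-2},H_a)$. We then apply Theorem \ref{thm.LangerIneqII} to $\mathcal E_X$, with $\mu=\frac{1}{n+1}c_1(X)\cdot H_a\cdot D_1\cdots D_{n-2}$ and $\mu_{\min}\geq 0$ by Proposition \ref{prop.gnef}.

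Letting $a\to 0$ as in Proposition \ref{prop.pseffofc2}, the desired inequality
\[
\frac{2(1+\epsilon)}{\epsilon}\hat c_2\geq c_1^2
\]
reduces to a sufficiently good upper bound on $\mu_{\max}(\mathcal E_X)$. Concretely, we need $\mu_{\max}\leq \beta\, c_1(X)\cdot H_a\cdot D_1\cdots D_{n-2}$ for a suitable $\beta<1$ depending on $\epsilon$. One then checks that $\beta=\frac{1}{1+\epsilon}$-type bounds give exactly the constant $\frac{2(1+\epsilon)}{\epsilon}$ after a routine optimization over the product $(\mu_{\max}-\mu)(\mu-\mu_{\min})$.

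If $c_1(X)^2\cdot D_1\cdots D_{n-2}=0$, the statement follows from Proposition \ref{prop.pseffofc2}, so we assume it is positive. Let $\mathcal R\subset\mathcal E_X$ be the maximal destabilizing subsheaf. If its slope is small, we are done. Otherwise, we first rule out higher rank destabilizing pieces by the semistable-case argument, in which the Bogomolov--Gieseker inequality \eqref{eq.BGineq} applies to the graded pieces. This leaves the key case where $\mathcal R$ has rank one and large slope. Note that $\mathcal R\not\subset\mathcal O_X$, since $\mathcal O_X$ has slope $0$. Hence $\mathcal R$ maps injectively into $\mathcal T_X$; let $\mathcal L$ be the saturation of its image. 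Then $c_1(\mathcal L)=c_1(\mathcal R)+D$ with $D\geq 0$ effective. The sheaf $\mathcal L$ is a rank one foliation with slope exceeding that of the positive part, so by the Campana--P\u{a}un/Bogomolov--McQuillan criterion it is algebraically integrable with rational curve leaves. This gives a family of leaves $e\colon U\to X$, $p\colon U\to T$ as in \eqref{eq.familyofleaves}, a $\mathbb P^1$-fibration, with induced foliation $\mathcal G$ satisfying $-K_U+\Delta=-e^*K_X$.

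The main obstacle is to prove a strengthened pseudoeffectivity statement: $K_{\mathcal G}+(\text{effective horizontal divisor})-\tfrac{\epsilon}{1+\epsilon}e^*(-K_X)$ should be pseudoeffective, so that after pushing forward $c_1(\mathcal L)\leq \frac{1}{1+\epsilon}c_1(X)$ up to the correction $D$. We split into two cases.

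In the first case, $\Exc(e)$ has a $p$-horizontal component $\Delta'$. We then argue as in \cite{liu2025}, applying Proposition \ref{prop.pseff} on the general fiber $F\cong\mathbb P^1$. The $\epsilon$-lc condition bounds the coefficients of $\Delta$ by $1-\epsilon$, which makes $(F,\Delta|_F)$ log canonical with $\deg(K_F+\text{boundary})\geq 0$.

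In the second case there is no such component, and we use the divisor $D$. Restricting the canonical extension to a general leaf $F$ and using the functoriality Proposition \ref{prop.functor}, the inclusion $\mathcal R|_F\to\mathcal E_X|_F$ factors through $\mathcal E_{j^*(-K_X)}$ of $F\cong\mathbb P^1$. We then compare with the splitting of the Euler-type sequence on $\mathbb P^1$ to show that $D|_F$ is nonzero with degree controlled by $-K_X\cdot F$. Hence $D$ is $p$-horizontal, and we can feed it into Proposition \ref{prop.pseff} as the boundary. This functoriality computation on $\mathbb P^1$, which pins down the degree of $D\cdot F$ precisely enough to give the constant $\frac{\epsilon}{1+\epsilon}$, is where I expect the real difficulty to lie.
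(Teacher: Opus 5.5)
Your architecture is the paper's: canonical extension $\mathcal E_X$, reduction to a bound $\mu_{\max}(\mathcal E_X)\le\beta\, c_1(X)^2\cdot D_1\cdots D_{n-2}$ with $\beta\le\frac{1}{1+\epsilon}$, a rank-one $\mathcal R$ giving a foliation $\mathcal L$ with $\mathbb P^1$ leaves, and a split according to whether a $p$-horizontal $e$-exceptional divisor exists. Polarizing by $H_a$ rather than by $c_1(X)$ itself is a harmless variation.

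\textbf{The higher-rank case has a gap.} Your justification for excluding a higher-rank $\mathcal R$ of large slope does not work. Bogomolov--Gieseker for the Harder--Narasimhan graded pieces is essentially how Theorem \ref{thm.LangerIneqII} is proved, and it gives no upper bound on $\mu_{\max}$. The missing input is generic nefness, which you only used for $\mu_{\min}\ge 0$. By Proposition \ref{prop.gnef} the quotient $\mathcal E_X/\mathcal R$ has nonnegative degree, so $c_1(\mathcal R)\cdot c_1(X)\cdot D_1\cdots D_{n-2}\le c_1(X)^2\cdot D_1\cdots D_{n-2}$. Hence $\mu(\mathcal R)\le \frac12\,c_1(X)^2\cdot D_1\cdots D_{n-2}$ whenever $\rank\mathcal R\ge 2$, i.e.\ $\beta=\frac12\le\frac1{1+\epsilon}$ because $\epsilon\le 1$.

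Your pseudoeffectivity target also has the wrong sign. It should be $K_{\mathcal G}+(\cdots)+\frac{1}{1+\epsilon}e^*(-K_X)$; as written, in the first case it would force $\mu(\mathcal L)<0$.

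\textbf{The second case is left open, and the difficulty is misplaced.} No precise value of $D\cdot F$ is needed, and $\epsilon$ plays no role there. Two things are needed.

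(i) Since no horizontal exceptional divisor exists, a general leaf $C=e(F)\cong\mathbb P^1$ is a general fiber of an almost holomorphic map. So $C\subset X_{\sm}$, $-K_X\cdot C=2$ and $\mathcal L|_C=\mathcal T_C$. By Proposition \ref{prop.functor} the restricted sequence $0\to\mathcal O_C\to\mathcal E'\to\mathcal T_C\to 0$ has extension class $c_1(-K_X|_C)\neq 0$, so it is non-split. If $D\cdot C=0$, then $\mathcal T_C\cong\mathcal L|_C\cong\mathcal R|_C\subset\mathcal E'$ would split it. Hence $d=D\cdot C\ge 1$, and that is all one needs.

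(ii) $D$ alone is not a usable boundary: for $d=1$, $\deg(K_F+D_U|_F)<0$. Instead use $\frac1d D_U$ together with a general $B\sim_{\mathbb Q}-\frac12 e^*K_X+\varepsilon H$, plus a vertical exceptional correction. This pair is log canonical on $F$, and $K_F$ plus it has degree $-2+1+1+\varepsilon H\cdot F\ge 0$. Proposition \ref{prop.pseff} and $\varepsilon\to 0$ give that $K_{\mathcal L}+\frac1d D-\frac12 K_X$ is pseudoeffective. Adding $(1-\frac1d)D$ shows $-c_1(\mathcal R)+\frac12 c_1(X)$ is pseudoeffective, i.e.\ again $\beta=\frac12$. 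So only the first case genuinely uses $\epsilon$.
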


\begin{proof}
    Suppose $c_1(X)^2\cdot D_1\cdots D_{n-2}=0$. Then it suffices to prove the pseudoeffectivity of $\hat c_2(X)$, which is given by Proposition \ref{prop.pseffofc2}. So we assume that $c_1(X)^2\cdot D_1\cdots D_{n-2}>0$ throughout the proof.
    Consider the canonical extension \eqref{eq.canextdef}
    \begin{equation}\label{eq.canext}
        0\to \mathcal O_X\to \mathcal E_X\xrightarrow{\iota}\mathcal T_X\to 0,
    \end{equation}
    where $\rank(\mathcal E_X)=n+1$, $c_1(\mathcal E_X)=c_1(X)$ and $\hat c_2(\mathcal E_X)=\hat c_2(X)$ by Proposition \ref{prop.c1c2}. 

    Suppose $\mathcal E_X$ is semistable with respect to $(D_1,\dots,D_{n-2}, c_1(X))$. Then the $\mathbb{Q}$-Bogomolov--Gieseker inequality \eqref{eq.BGineq} provides that 
    \[
        c_1(X)^{2}\cdot D_1\cdots D_{n-2}\leq \frac{2(n+1)}{n}\hat c_2(X)\cdot D_1\cdots D_{n-2}< \frac{2(1+\epsilon)}{\epsilon}\,\hat c_2(X)\cdot D_1\cdots D_{n-2}.
    \]
    
    So we may assume that $\mathcal E_X$ is not semistable with respect to  $(D_1,\dots,D_{n-2}, c_1(X))$. By Proposition \ref{prop.gnef}, we have $\mu_{\min}(\mathcal{E}_X)\geq 0$. As $c_1(X)^2\cdot D_1\cdots D_{n-2}>0$ and $\mu_{\max}(\mathcal{E}_X)>\mu(\mathcal{E}_X)=\frac{c_1(X)^2\cdot D_1\cdots D_{n-2}}{n+1}>0$, applying Theorem~\ref{thm.LangerIneqII} to $\mathcal E_X$ and $(D_1,\dots,D_{n-2}, c_1(X))$ yields
    \begin{align*}
        \hat{\Delta}(\mathcal{E}_X)\cdot D_1\cdots D_{n-2} + (n+1)\left(\mu_{\max}(\mathcal{E}_X)-\mu(\mathcal{E}_X)\right)
        & \geq 0,
    \end{align*}
   which implies that
    \begin{equation}\label{eq.keyineq}
        (2\hat{c}_2(X)- c_1(X)^{2})\cdot D_1\cdots D_{n-2}  + \mu_{\max}(\mathcal{E}_X) \geq 0.
    \end{equation}
    Denote by $\mathcal{F}$ the maximal destabilizing subsheaf of $\mathcal{E}_X$. Then Proposition \ref{prop.gnef} implies that 
    \begin{equation}\label{eq.genample}
        c_1(\mathcal{F})\cdot c_1(X)\cdot D_1\cdots D_{n-2}\leq c_1(X)^2\cdot D_1\cdots D_{n-2}.
    \end{equation}
    Suppose $\rank (\mathcal F)\geq 2$. Then \eqref{eq.genample} provides that
    \[
        \mu_{\max}(\mathcal{E}_X) = \mu(\mathcal{F})=\frac{c_1(\mathcal{F})\cdot c_1(X)\cdot D_1\cdots D_{n-2}}{\rank (\mathcal{F})} \leq \frac{1}{2} \, c_1(X)^2\cdot D_1\cdots D_{n-2}.
    \]
    Combining with \eqref{eq.keyineq} yields that
    \[
        c_1(X)^{2}\cdot D_1\cdots D_{n-2}\leq 4\,\hat c_2(X)\cdot D_1\cdots D_{n-2}.
    \]
    Suppose $\rank (\mathcal F)=1$. Then consider the composition $\mathcal F \hookrightarrow \mathcal E_X \to \mathcal T_X$, where $\mathcal F$ is reflexive. If $\mathcal F\to \mathcal T_X$  is zero, i.e., $\mathcal F\subset \mathcal O_X$, 
    then $c_1(\mathcal F)\cdot c_1(X)\cdot D_1\cdots D_{n-2}\leq 0$, contradicting that $\mu(\mathcal{F})=\mu_{\max}(\mathcal{E}_X)>0$.
    Hence we assume that $\mathcal F\to \mathcal T_X$ is injective.
    Let $\mathcal L\subset \mathcal T_X$ be the saturation of its image. Then $\mathcal F\hookrightarrow \mathcal L$, and hence there exists an effective Weil divisor $D$ such that $\mathcal F\simeq \mathcal L(-D)^{**}$. It follows that $c_1(\mathcal L)=c_1(\mathcal F)+D$ and $\mu(\mathcal L)\geq \mu(\mathcal F)>0$. Then by \cite{araujo-druel-2019}*{Theorem 2.19} or \cite{ou}*{Proposition 2.2} (which extend \cite{cp}*{Theorem 1.1} to singular settings), the saturated subsheaf $\mathcal L$ of $\mathcal T_X$ is a rank one algebraically integrable foliation with rationally connected general leaves, and hence induces a family of leaves as in \eqref{eq.familyofleaves}:
    \begin{equation}\label{eq.folwithrc}
    \begin{tikzcd}[row sep=large, column sep=large]
	U \arrow[r,"e"] \arrow[d,"f"]
	& X \\
	T
    \end{tikzcd}
    \end{equation}
    where the general fiber $F$ of $f$ is isomorphic to $\mathbb P^1$. Recall that $X$ has $\epsilon$-lc singularities, where $0<\epsilon\leq 1$ is a real number. Suppose that there exists an $f$-horizontal $e$-exceptional prime divisor on $U$. Then by \cite{liu2025}*{Remark 3.2 and Proposition 3.1 (3)} (where the weak Fano condition can be weaken to that $-K_X$ is nef), we have 
    \[
        (c_1(X)+(1+\epsilon)K_{\mathcal L})\cdot c_1(X)\cdot D_1\cdots D_{n-2}\geq 0.
    \]
    As $\mu_{\max}(\mathcal{E}_X)= \mu(\mathcal{F})=c_1(\mathcal{F})\cdot c_1(X)\cdot D_1\cdots D_{n-2}=(-K_{\mathcal L}-D)\cdot c_1(X)\cdot D_1\cdots D_{n-2}\leq -K_{\mathcal L}\cdot c_1(X)\cdot D_1\cdots D_{n-2}$, combining with \eqref{eq.keyineq} yields that 
    \[
        c_1(X)^{2} \cdot D_1\cdots D_{n-2}\leq \frac{2(1+\epsilon)}{\epsilon}\,\hat c_2(X)\cdot D_1\cdots D_{n-2}.
    \]
    
    So we assume that there exists no $f$-horizontal $e$-exceptional prime divisor on $U$. In this case, the rank one foliation $\mathcal L$ gives rise to an almost holomorphic map $g\colon X\dasharrow T$,  whose general fiber $C\coloneq e(F)\simeq \mathbb P^1$, and $f,g$ are isomorphic around general fibers $F$ and $C$. In particular, $C\subset X_{\sm}$ and $\mathcal L|_C=\mathcal T_C\subset \mathcal T_X|_C$, which implies that $c_1(\mathcal L)\cdot C=-K_X\cdot C=2$. We restrict \eqref{eq.canext} to $C$ and set  $\mathcal E'\coloneq (\iota|_C)^{-1}(\mathcal T_C)$. Then the functoriality of the canonical extension \eqref{eq.functor} provides that
    \begin{equation}\label{eq.exseqonC}
        0\to \mathcal O_C \to \mathcal E'=\mathcal E_{-K_X|_C}\to \mathcal T_C\to 0.
    \end{equation}
    As the extension class of \eqref{eq.exseqonC} is $-K_X|_C$ with $c_1(-K_X|_C)=-K_X\cdot C=2$, the exact sequence \eqref{eq.exseqonC} is not split. 

    We claim that $d\coloneq D\cdot C\geq 1$. As $D$ is Cartier around the general fiber $C$, we have $d\in\mathbb Z_{\geq 0}$. Suppose $d=0$. Then $\mathcal F|_C\simeq \mathcal L(-D)^{**}|_C\simeq\mathcal L|_C$. As $\mathcal F|_C\subset \mathcal E'$ by our construction, the isomorphism $\mathcal T_C\simeq \mathcal L|_C\simeq \mathcal F|_C\subset \mathcal E'$ implies that \eqref{eq.exseqonC} is split, which is a contradiction. Therefore, $d=D\cdot C\geq 1$.

    Let $D_U$ be the strict transform of the non-trivial effective Weil divisor $D$ on $U$. Then $D_U\cdot F=D\cdot C=d\geq 1$.  As in \cite{liu2025}*{Proposition 3.1}, we have
    \[
        K_U-\frac{1}{2}e^*K_X+\frac{1}{d}D_U+\Delta_1\sim_{\mathbb Q} \frac{1}{2}e^*K_X+\frac{1}{d}D_U+\Delta_2,
    \]
    where $\Delta_1$ and $\Delta_2$ are effective $f$-vertical $e$-exceptional $\mathbb Q$-divisors without common components. Then on the general fiber $F$, we have $(\frac{1}{2}e^*K_X+\frac{1}{d}D_U+\Delta_2)\cdot F=-1+1+\Delta_2\cdot F=0$. In particular, $(\frac{1}{2}e^*K_X+\frac{1}{d}D_U+\Delta_2)|_F$ is $\mathbb Q$-linearly trivial on $F\simeq \mathbb P^1$. 

    Let $H$ be an ample divisor on $U$ and $\varepsilon>0$ be a rational number. Then $-\frac{1}{2}e^*K_X+\varepsilon H$ is ample as $-e^*K_X$ is nef. Note that $D_U$ and $\Delta_1$ have no common components, as $D_U$ is a strict transform and $\Delta_1$ is $e$-exceptional. So we can choose an effective $\mathbb Q$-divisor $B$ general enough  on $U$ such that $B\sim_{\mathbb Q} -\frac{1}{2}e^*K_X+\varepsilon H$ and $(F, \frac{1}{d}D_U|_F+\Delta_1|_F+B|_F)$ is log canonical. Moreover, we have
    \[
        K_F+\frac{1}{d}D_U|_F+\Delta_1|_F+B|_F\sim_{\mathbb Q} (\frac{1}{2}e^*K_X+\frac{1}{d}D_U+\Delta_2+\varepsilon H)|_F,
    \]
    which implies that $\kappa(F, K_F+\frac{1}{d}D_U|_F+\Delta_1|_F+B|_F)\geq 0$.
    
    As in \cite{liu2025}*{Proposition 3.1}, we can further assume that $U$ is $\mathbb Q$-factorial by taking a resolution of $U$ if necessary. Then we can apply Proposition \ref{prop.pseff} to $f\colon U\to T$ and obtain that $K_{\mathcal G}+\frac{1}{d}D_U+\Delta_1+B$ is pseudoeffective, where $\mathcal G$ is the foliation induced by $f$. This implies that $K_{\mathcal G}+\frac{1}{d}D_U+\Delta_1-\frac{1}{2}e^*K_X$ is pseudoeffective by taking limit $\varepsilon\to 0$. Hence 
    \[
        K_{\mathcal L}+\frac{1}{d}D-\frac{1}{2}K_X=e_*(K_{\mathcal G}+\frac{1}{d}D_U+\Delta_1-\frac{1}{2}e^*K_X)
    \]
    is pseudoeffective. Adding the effective divisor $(1-\frac{1}{d})D$ provides that 
    \[
        -c_1(\mathcal F)-\frac{1}{2}K_X=-c_1(\mathcal L)+D-\frac{1}{2}K_X=K_{\mathcal L}+D-\frac{1}{2}K_X
    \]
    is also pseudoeffective.
    Hence $-c_1(\mathcal F)\cdot c_1(X)\cdot D_1\cdots D_{n-2}+\frac{1}{2}c_1(X)^2\cdot D_1\cdots D_{n-2}\geq 0$, and combining with \eqref{eq.keyineq} yields that
    \[
        c_1(X)^{2}\cdot D_1\cdots D_{n-2}\leq 4\,\hat c_2(X)\cdot D_1\cdots D_{n-2}. \qedhere
    \]
\end{proof}

We can drop the $\mathbb Q$-factoriality condition by pushing forward the orbifold Chern classes from a $\mathbb Q$-factorization, whose existence is given by \cite{BCHM}.

\begin{cor}\label{cor.factorization}
    Let $X$ be an $\epsilon$-lc projective variety of dimension $n\geq 2$ such that $-K_X$ is nef and $0<\epsilon \leq 1$ is a real number. Then for any nef divisors $D_1,\dots,D_{n-2}$, we have 
    \[
         c_1(X)^{2} \cdot D_1\cdots D_{n-2}\leq \frac{2(1+\epsilon)}{\epsilon}\,\hat c_2(X)\cdot D_1\cdots D_{n-2}.
    \]
\end{cor}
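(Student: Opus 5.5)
The plan is to reduce Corollary \ref{cor.factorization} to Theorem \ref{thm.main} by passing to a small $\mathbb Q$-factorial modification. Since $X$ is $\epsilon$-lc, it is klt, so by \cite{BCHM} there exists a $\mathbb Q$-factorization $f\colon Y\to X$. This is a small projective birational morphism with $Y$ $\mathbb Q$-factorial.

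The first step is to transfer the hypotheses to $Y$. As $f$ is small, $K_Y=f^*K_X$. Hence $-K_Y=f^*(-K_X)$ is nef, being the pullback of a nef divisor. Discrepancies of divisors over $Y$ coincide with those over $X$, since crepant models share all log discrepancies, so $Y$ is again $\epsilon$-lc. Projectivity of $Y$ comes from $f$ being projective. Thus Theorem \ref{thm.main} applies to $Y$.

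The second step is to apply Theorem \ref{thm.main} to $Y$ with the nef divisors $f^*D_1,\dots,f^*D_{n-2}$. This gives
\[
c_1(Y)^2\cdot f^*D_1\cdots f^*D_{n-2}\leq \frac{2(1+\epsilon)}{\epsilon}\,\hat c_2(Y)\cdot f^*D_1\cdots f^*D_{n-2}.
\]
Now identify both sides with the corresponding quantities on $X$. The right side is $\frac{2(1+\epsilon)}{\epsilon}\,f_*\hat c_2(Y)\cdot D_1\cdots D_{n-2}$ by definition. By Proposition \ref{prop.factorization} this equals $\frac{2(1+\epsilon)}{\epsilon}\,\hat c_2(X)\cdot D_1\cdots D_{n-2}$. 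For the left side, $c_1(Y)=f^*c_1(X)$, so the projection formula gives
\[
c_1(Y)^2\cdot f^*D_1\cdots f^*D_{n-2}=c_1(X)^2\cdot D_1\cdots D_{n-2},
\]
where the right-hand side is the product defined in \S\ref{sub.langerineq}.

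The only point needing care is this last compatibility, i.e.\ that the intersection product $\beta\cdot\gamma$ on the non-$\mathbb Q$-factorial $X$ agrees with the pullback computation on $Y$. Here $c_1(X)$ is $\mathbb Q$-Cartier, so everything is a genuine intersection of $\mathbb Q$-Cartier classes on $X$, and the projection formula applies directly. I expect this step to be routine rather than a genuine obstacle. The substantive content, the compatibility of $\hat c_2$ with $f$, is already provided by Proposition \ref{prop.factorization}.
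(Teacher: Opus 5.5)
Your proposal is correct and follows the paper's proof. You pass to a $\mathbb Q$-factorization $f\colon Y\to X$, check that $Y$ is $\epsilon$-lc with $-K_Y=f^*(-K_X)$ nef, apply Theorem \ref{thm.main} to $Y$ with $(f^*D_1,\dots,f^*D_{n-2})$, and identify both sides via the projection formula and Proposition \ref{prop.factorization}. The paper instead cites \cite{kollar-mori}*{Proposition 5.20} for the hypotheses on $Y$; your argument from $K_Y=f^*K_X$ (which holds because $f$ is small) justifies this directly.
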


\begin{proof}
    Let $f\colon Y\to X$ be a $\mathbb Q$-factorization. Then $Y$ is a  $\mathbb Q$-factorial $\epsilon$-lc projective variety such that $-K_Y$ is nef by \cite{kollar-mori}*{Proposition 5.20}. The conclusion follows by applying Theorem \ref{thm.main} to $Y$ with respect to $(f^*D_1,\dots, f^*D_{n-2})$ and Proposition \ref{prop.factorization}.
\end{proof}

The following example shows that the bound $\frac{2(1+\epsilon)}{\epsilon}$ is nearly optimal.

\begin{ex}
    Let $n\geq 2$ and $q\geq 3$ be two positive integers. Let $X_{n,q}\coloneq \mathbb P(1,\dots,1, nq)$, which is a $\mathbb Q$-factorial $\epsilon\coloneq \frac{1}{q}$-lc toric Fano variety of dimension $n$, as the only singularity is $\frac{1}{nq}(1,\dots,1)$. A direct calculation shows that
    \[
        c_1(X)^n=\frac{n^{n-1}(q+1)^n}{q} \quad \text{and} \quad \hat c_2(X)\cdot c_1(X)^{n-2}=(nq+\frac{n-1}{2})\frac{n^{n-2}(q+1)^{n-2}}{q}.
    \]
    It follows that
    \[
        4< b_X\coloneq \frac{c_1(X)^n}{\hat c_2(X)\cdot c_1(X)^{n-2}}=\frac{n(q+1)^2}{nq+\frac{n-1}{2}}< 2(q+1)=\frac{2(1+\epsilon)}{\epsilon}.
    \]
    Moreover, we see that $b_X\approx  \frac{1}{\epsilon}+\frac{3n+1}{2n}$ when $q\to +\infty$.
    
    In this case, we can also see from the proof of Theorem \ref{thm.main} that there must exist a rank one foliation on $X_{n,q}$ and an $f$-horizontal $e$-exceptional prime divisor on the universal family of leaves.
\end{ex}


\end{document}